\newcommand*{\MRref}[2]{ \href{http://www.ams.org/mathscinet-getitem?mr=#1}{MR \textbf{#1}}}
\newcommand*{\arxiv}[1]{\href{http://www.arxiv.org/abs/#1}{arXiv: #1}}
\renewcommand{\PrintDOI}[1]{\href{http://dx.doi.org/\detokenize{#1}}{doi: \detokenize{#1}}}
\DeclareMathOperator{\Obj}{\mathrm{Obj}}
\DeclareMathOperator{\C}{C}
\newcommand{\tp}{\mathbin{\xymatrix{*+<.7ex>[o][F-]{\scriptstyle\top}}}}
\newcommand{\comments}[1]{}
\newcommand{\cst}{\textup{C}^*}
\newcommand{\id}{\textup{id}}
\newcommand{\I}{\textup{I}}
\newcommand{\Mor}{\operatorname{Mor}}
\newcommand{\M}{\operatorname{M}}
\newcommand{\tens}{\otimes}
\newcommand{\Tens}{\boxtimes}
\newcommand{\gbar}{\overline{g}}
\newcommand{\qbar}{\overline{q}}
\newcommand{\lambdabar}{\overline{\lambda}}
\newcommand{\gammatilde}{{\widetilde\gamma}}
\newcommand{\alphatilde}{{\widetilde \alpha}}
\newcommand{\zesp}{{\mathbb C}}
\newcommand{\torus}{{\mathbb T}}
\newcommand{\inte}{{\mathbb Z}}
\newcommand{\Nideal}{\mathfrak{N}}
\newcommand{\Vs}[1]{\rule{0mm}{#1mm}}
\newcommand{\vs}{\vspace{2mm}}
\newcommand{\compcent}[1]{\vcenter{\hbox{$#1\circ$}}}
\newcommand{\comp}{\mathbin{\mathchoice
{\compcent\scriptstyle}{\compcent\scriptstyle}
{\compcent\scriptscriptstyle}{\compcent\scriptscriptstyle}}}
\newcommand{\set}[2]{\left\{#1:#2\right\}}
\newcommand{\modul}[1]{\left|#1\right|}
\newcommand{\xxx}[1]{}
\numberwithin{equation}{section}
\newtheorem{Thm}{Theorem}[section]
\newtheorem{Lem}[Thm]{Lemma}
\newtheorem{Prop}[Thm]{Proposition}
\theoremstyle{definition}
\newtheorem{Def}[Thm]{Definition}
\theoremstyle{remark}
\newcommand{\etyk}[1]{\label{#1}\stepcounter{equation}\tag{\theequation}}
\newcommand*{\nb}{\nobreakdash}
\newcommand*{\SU}{\textup{SU}}
\newcommand*{\nSU}{\textup{U}}
\newcommand{\algebra}{{A}}
\newcommand{\GG}{\mathbb{G}}
\begin{document}
\title{Braided quantum SU(2) groups}

\author[Kasprzak]{Pawe\l{} Kasprzak}
\email{pawel.kasprzak@fuw.edu.pl}
\address{Katedra Metod Matematycznych Fizyki\\
  Wydział Fizyki, Uniwersytet Warszawski\\
  Pasteura 5\\02-093 Warszawa\\Poland}

\author[Meyer]{Ralf Meyer}
\email{rmeyer2@uni-goettingen.de}
\address{Mathematisches Institut\\
  Georg-August Universität Göttingen\\
  Bunsenstraße 3--5\\
  37073 Göttingen\\
  Germany}

\author[Roy]{Sutanu Roy}
\email{rmssutanu85@gmail.com}
\address{Department of Mathematics and Statistics\\
  University of Ottawa\\
  585 King Edward Avenue\\
  Ottawa, ON K1N 6N5\\
  Canada}

\author[Woronowicz]{Stanisław Lech Woronowicz}
\email{Stanislaw.Woronowicz@fuw.edu.pl}
\address{Instytut Matematyki\\Uniwersytet w Białymstoku, and\\
  Katedra Metod Matematycznych Fizyki\\
  Wydział Fizyki, Uniwersytet Warszawski\\
  Pasteura 5, 02-093 Warszawa, Poland}

\begin{abstract}
  We construct a family of $q$\nb-deformations of SU(2) for complex
  parameters $q\neq0$.  For real~$q$, the deformation coincides with
  Woronowicz' compact quantum $\SU_q(2)$ group.  For $q\notin
  \mathbb{R}$, $\SU_q(2)$ is only a braided compact quantum group
  with respect to a certain tensor product functor for
  \(\cst\)\nb-algebras with an action of the circle group.
\end{abstract}

\subjclass[2010]{81R50 (46L55, 46L06)}
\keywords{braided compact quantum group; $\SU_q(2)$; $\nSU_q(2)$.}
\thanks{S.~Roy was supported by a Fields--Ontario Postdoctoral
  fellowship.
  St.L.~Woronowicz was supported by the Alexander von Humboldt-Stiftung.}

\maketitle

\section{Introduction}
The $q$\nb-deformations of $\SU(2)$ for real deformation parameters
$0<q<1$ discovered in~\cite{Woronowicz:Twisted_SU2} are among the
first and most important examples of compact quantum groups.  Here we
construct a family of $q$\nb-deformations of~$\SU(2)$ for
\emph{complex} parameters $q\in\zesp^*=\zesp\setminus\{0\}$.
For $q\notin \mathbb{R}$, $\SU_q(2)$ is not a compact quantum group,
but a braided compact quantum group in a suitable tensor category.

A compact quantum group~$\mathbb{G}$ as defined
in~\cite{Woronowicz:CQG} is a pair $\mathbb{G} = (A,\Delta)$ where
$\Delta\colon A\rightarrow A\otimes A$ is a coassociative morphism
satisfying the cancellation law~\eqref{cancellation} below.  The
\(\cst\)\nb-algebra~$A$ is viewed as the algebra of continuous
functions on~$\GG$.

The theory of compact quantum groups is formulated within the
category~$\mathcal{C}^*$ of $\cst$-algebras.  This category with the
minimal tensor functor~$\otimes$ is a monoidal category
(see~\cite{MacLane:Categories}).  A more general theory may be
formulated within a monoidal category $(\mathcal{D}^*,\Tens)$,
where~$\mathcal{D}^*$ is a suitable category of $\cst$\nb-algebras
with additional structure and $\Tens\colon
\mathcal{D}^*\times\mathcal{D}^*\rightarrow\mathcal{D}^*$ is a
monoidal bifunctor on~$\mathcal{D}^*$.  Braided Hopf algebras may be
defined in braided monoidal categories (see
\cite{Majid:Quantum_grp}*{Definition 9.4.5}).  The braiding becomes
unnecessary when we work in categories of \(\cst\)\nb-algebras.

Let $A$ and~$B$ be $\cst$\nb-algebras.  The multiplier algebra
of~$B$ is denoted by~$\M(B)$.  A \emph{morphism} $\pi\in\Mor(A,B)$
is a $^*$\nb-homomorphism $\pi\colon A\rightarrow\M(B)$ with
$\pi(A)B = B$.  If \(A\) and~\(B\) are unital, a morphism is simply
a unital $^*$\nb-homomorphism.

Let~$\mathbb{T}$ be the group of complex numbers of modulus~$1$ and
let~$\mathcal{C}^*_{\mathbb{T}}$ be the category of
$\mathbb{T}$\nb-$\cst$-algebras; its objects are $\cst$\nb-algebras
with an action of~$\mathbb{T}$, arrows are
\(\mathbb{T}\)\nb-equivariant morphisms.  We shall use a family of
monoidal structures~$\Tens_{\zeta}$ on~$\mathcal{C}^*_{\mathbb{T}}$
parametrised by
$\zeta\in\mathbb{T}$, which is defined as
in~\cite{Meyer-Roy-Woronowicz:Twisted_tensor}.

The $\cst$\nb-algebra~$\algebra$ of~$\SU_q(2)$ is defined as the
universal unital $\cst$\nb-algebra generated by two elements
$\alpha,\gamma$ subject to the relations
\[
\etyk{SU2q}
\left\{
\begin{array}{r@{\;=\;}l}
\alpha^{*}\alpha+\gamma^{*}\gamma&\I,\\
\alpha\alpha^{*}+\modul{q}^{2}\gamma^{*}\gamma&\I,\\
\gamma\gamma^{*}&\gamma^{*}\gamma,\\
\alpha\gamma&\qbar\gamma\alpha,\\
\alpha\gamma^{*}&q\gamma^{*}\alpha.
\end{array}
\right.
\]

For real~$q$, the algebra~\(\algebra\) coincides with the algebra of
continuous functions on the quantum $\SU_q(2)$ group described
in~\cite{Woronowicz:Twisted_SU2}: $\algebra =\C(\SU_{q}(2))$.  Then
there is a unique morphism $\Delta\colon \algebra\to \algebra \tens
\algebra$ with
\[
\etyk{Delta}
\begin{array}{r@{\;=\;}l}
\Delta(\alpha)&\alpha\tens\alpha-q\gamma^{*}\tens\gamma,\\
\Delta(\gamma)&\gamma\tens\alpha+\alpha^{*}\tens\gamma.
\end{array}
\]
It is coassociative, that is,
\[
\etyk{coassociative}
(\Delta\tens\id_{\algebra})\comp\Delta
= (\id_{\algebra}\tens\Delta)\comp\Delta,
\]
and has the following cancellation property:
\[
\etyk{cancellation}
\begin{aligned}
  A\otimes A &= \Delta(A)(A\otimes\I),\\
  A\otimes A &= \Delta(A)(\I\otimes A);
\end{aligned}
\]
here and below, $EF$ for two closed subspaces $E$ and~$F$ of a
$\cst$\nb-algebra denotes the norm-closed linear span of the set of
products~$ef$ for $e\in E$, $f\in F$.

If~$q$ is not real, then the operators on the right hand sides
of~\eqref{Delta} do not satisfy the relations~\eqref{SU2q}, so there
is no morphism~$\Delta$ satisfying~\eqref{Delta}.  Instead,
\eqref{Delta} defines a \(\mathbb{T}\)\nb-equivariant morphism
\(\algebra\to \algebra\Tens_\zeta \algebra\) for the monoidal
functor~$\Tens_\zeta$ with $\zeta = q/\qbar$.  This morphism
in~$\mathcal{C}^*_{\mathbb{T}}$ satisfies appropriate analogues of
the coassociativity and cancellation laws \eqref{coassociative}
and~\eqref{cancellation}, so we get a braided compact quantum group.
Here the action of~\(\mathbb{T}\) on~\(A\) is defined by
\(\rho_z(\alpha)=\alpha\) and \(\rho_z(\gamma)=z\gamma\) for all
\(z\in\mathbb{T}\).

For $X,Y\in\Obj(\mathcal{C}^*)$, $X\tens Y$ contains commuting
copies $X\tens\I_{Y}$ of~$X$ and $\I_{X}\tens Y$ of~$Y$ with
\(X\tens Y=(X\tens\I_{Y})(\I_{X}\tens Y)\).  Similarly,
$X\Tens_\zeta Y$ for $X,Y\in\mathcal{C}^*_{\mathbb{T}}$ is a
$\cst$\nb-algebra with injective morphisms
$j_{1}\in\Mor(X,X\Tens_\zeta Y)$ and $j_{2}\in\Mor(Y,X\Tens_\zeta
Y)$ such that $X\Tens_\zeta Y = j_{1}(X)j_{2}(Y)$.  For
$\mathbb{T}$\nb-homogeneous elements $x\in X_k$ and $y\in Y_l$ (as
defined in~\eqref{homel}), we have the commutation relation
\[
\etyk{com}
j_{1}(x)j_{2}(y)=\zeta^{k l}j_{2}(y)j_{1}(x)
\]
The following theorem contains the main result of this paper:

\begin{Thm}
  \label{main}
  Let \(q\in\mathbb{C}\setminus\{0\}\) and \(\zeta=q/\qbar\).  Then
  \begin{enumerate}
  \item there is a unique \(\mathbb{T}\)\nb-equivariant morphism
    $\Delta\in\Mor(\algebra,\algebra\Tens_\zeta \algebra)$ with
    \[
    \etyk{Delta1}
    \begin{aligned}
      \Delta(\alpha)&=
      j_{1}(\alpha)j_{2}(\alpha)-qj_{1}(\gamma)^{*}j_{2}(\gamma),\\
      \Vs{5}\Delta(\gamma)&=
      j_{1}(\gamma)j_{2}(\alpha)+j_{1}(\alpha)^{*}j_{2}(\gamma);
    \end{aligned}
    \]
  \item $\Delta$ is coassociative, that is,
    \[
    (\Delta\Tens_\zeta\id_{\algebra})\comp\Delta
    = (\id_{\algebra}\Tens_\zeta\Delta)\comp\Delta;
    \]
  \item $\Delta$ obeys the cancellation law
    \[
    j_{1}(\algebra)\Delta(\algebra)
    = \Delta(\algebra)j_{2}(\algebra)
    = \algebra\Tens_\zeta \algebra.
    \]
  \end{enumerate}
\end{Thm}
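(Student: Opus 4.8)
The plan is to organise everything around the \emph{fundamental matrix}
\[
u=\begin{pmatrix}\alpha&-q\gamma^{*}\\ \gamma&\alpha^{*}\end{pmatrix}
\]
over~\(\algebra\).  First I would record the elementary but crucial observation that the five relations~\eqref{SU2q} are \emph{exactly} equivalent to \(u\) being unitary, \(u^{*}u=uu^{*}=\I\): the diagonal entries of \(u^{*}u\) and \(uu^{*}\) reproduce the first two relations together with \(\gamma\gamma^{*}=\gamma^{*}\gamma\), and the two off-diagonal entries vanish precisely because of \(\alpha\gamma=\qbar\gamma\alpha\) and \(\alpha\gamma^{*}=q\gamma^{*}\alpha\).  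I would also note that each \(u_{ij}\) is \(\torus\)\nb-homogeneous, of degree \(\delta_{i2}-\delta_{j2}\), for the action \(\rho_z(\alpha)=\alpha\), \(\rho_z(\gamma)=z\gamma\).

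For part~(1) I would form, inside \(\algebra\Tens_\zeta\algebra\), the two \(2\times2\) matrices \(U_{1}=\bigl(j_{1}(u_{ij})\bigr)_{i,j}\) and \(U_{2}=\bigl(j_{2}(u_{ij})\bigr)_{i,j}\).  Since \(j_{1},j_{2}\) are \(^{*}\)\nb-homomorphisms and \(u\) is unitary, \(U_{1}\) and \(U_{2}\) are unitary, hence so is the product \(U=U_{1}U_{2}\), whose entries are \(U_{ij}=\sum_{k}j_{1}(u_{ik})j_{2}(u_{kj})\).  Next I would compute the entries of \(U\) and check, using~\eqref{com} with \(\zeta=q/\qbar\), that \(U\) again has the shape \(\bigl(\begin{smallmatrix}U_{11}&-qU_{21}^{*}\\ U_{21}&U_{11}^{*}\end{smallmatrix}\bigr)\) with \(U_{11},U_{21}\) the right-hand sides of~\eqref{Delta1}; this is exactly the point where the value of~\(\zeta\) is forced, since the identities \(U_{22}=U_{11}^{*}\) and \(U_{12}=-qU_{21}^{*}\) rest on \(j_{1}(\gamma)j_{2}(\gamma)^{*}=\zeta^{-1}j_{2}(\gamma)^{*}j_{1}(\gamma)\) together with \(q\zeta^{-1}=\qbar\).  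Unitarity of \(U\) then says that \((U_{11},U_{21})\) satisfies~\eqref{SU2q}, so the universal property of~\(\algebra\) provides a unique unital \(^{*}\)\nb-homomorphism — automatically a morphism as both algebras are unital — with \(\Delta(\alpha)=U_{11}\), \(\Delta(\gamma)=U_{21}\), which is~\eqref{Delta1}.  Uniqueness is immediate because \(\alpha,\gamma\) generate~\(\algebra\), and \(\torus\)\nb-equivariance is checked on the generators, the \(z\)\nb-weights of \(j_{i}(\gamma)\) and \(j_{i}(\gamma)^{*}\) cancelling in the first line of~\eqref{Delta1}.

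For part~(2) I would note that both composites are morphisms \(\algebra\to\algebra\Tens_\zeta\algebra\Tens_\zeta\algebra\) (using associativity of the monoidal functor \(\Tens_\zeta\)), so it is enough to compare them on \(\alpha,\gamma\), i.e.\ on the \(u_{ij}\); with \(j_{1},j_{2},j_{3}\) the three canonical legs and the standard formulas for \(\Delta\Tens_\zeta\id\) and \(\id\Tens_\zeta\Delta\) on legs, both sides come out equal to the triple ``matrix product'' \(\sum_{k,l}j_{1}(u_{ik})j_{2}(u_{kl})j_{3}(u_{lj})\), with no reordering of legs, so the braiding does not intervene.  For part~(3) I would first read off, from \(U_{1}^{*}U=U_{2}\) and \(UU_{2}^{*}=U_{1}\), the entrywise identities
\[
\sum_{i}j_{1}(u_{ik})^{*}\,\Delta(u_{ij})=j_{2}(u_{kj}),\qquad
\sum_{j}\Delta(u_{ij})\,j_{2}(u_{kj})^{*}=j_{1}(u_{ik});
\]
here leg\nb-\(1\) factors are only ever multiplied among themselves before meeting a leg\nb-\(2\) factor, so again no \(\zeta\)\nb-twist is incurred, and these show that \(j_{2}(\alpha),j_{2}(\gamma)\) and their adjoints lie in \(j_{1}(\algebra)\Delta(\algebra)\), while \(j_{1}(\alpha),j_{1}(\gamma)\) and their adjoints lie in \(\Delta(\algebra)j_{2}(\algebra)\).

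The hard part is to upgrade these membership statements to the full cancellation law, as a plain module/ideal argument does not obviously close up.  Here the plan is to pass to the dense unital \(^{*}\)\nb-subalgebra \(\skrA\subseteq\algebra\) generated algebraically by \(\alpha,\gamma\): \(\Delta\) restricts to a map from \(\skrA\) into the algebraic linear span \(\skrA\odot\skrA\) of \(j_{1}(\skrA)j_{2}(\skrA)\) in \(\algebra\Tens_\zeta\algebra\), and this span is norm-dense there.  I would equip \(\skrA\) with the counit \(\epsilon\colon\skrA\to\zesp\), \(\epsilon(\alpha)=1\), \(\epsilon(\gamma)=0\) (well defined since \((1,0)\) satisfies~\eqref{SU2q}), and the braided antipode determined by \(S(u_{ij})=u_{ji}^{*}\), and verify the braided Hopf \(^{*}\)\nb-algebra axioms for the \(\torus\)\nb-grading; on the generators these reduce to \(\Delta(u_{ij})=\sum_{k}j_{1}(u_{ik})j_{2}(u_{kj})\), \(\sum_{k}\epsilon(u_{ik})u_{kj}=u_{ij}\), and \(\sum_{k}u_{ki}^{*}u_{kj}=\delta_{ij}=\sum_{k}u_{ik}u_{jk}^{*}\), with the \(\zeta\)\nb-twists appearing in the multiplicativity of \(S\) again being exactly those dictated by \(\zeta=q/\qbar\).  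Then the two ``Galois'' maps \(a\odot b\mapsto j_{1}(a)\Delta(b)\) and \(a\odot b\mapsto\Delta(a)j_{2}(b)\) on \(\skrA\odot\skrA\) are bijective — with inverses built from \(S\) in the usual way — hence surjective, so \(j_{1}(\skrA)\Delta(\skrA)=\skrA\odot\skrA=\Delta(\skrA)j_{2}(\skrA)\); taking norm closures and using the density just noted yields the three stated cancellation identities.
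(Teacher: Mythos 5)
Your treatment of parts (1) and (2) is essentially the paper's own argument: package the relations as unitarity of the fundamental matrix $u$ (Lemma~\ref{un}), observe that $j_1(u)j_2(u)$ is again unitary and, using~\eqref{com} with $\zeta=q/\qbar$, again of the special form, invoke the universal property (Theorem~\ref{universal}), and check coassociativity on generators via the triple matrix product $j_1(u)j_2(u)j_3(u)$. Your entrywise identities from $U_1^*U=U_2$ and $UU_2^*=U_1$ are also exactly the paper's equations~\eqref{Dmm1} and~\eqref{Dmm2}, which put the four generators into $\Delta(\algebra)j_2(\algebra)$ and $j_1(\algebra)\Delta(\algebra)$ respectively. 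Up to this point everything is correct.

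Where you depart from the paper is in upgrading these membership statements to the full cancellation law, and here there is a genuine gap. Your premise that ``a plain module/ideal argument does not obviously close up'' is mistaken: the set $S=\{x\in\algebra: j_1(x)\in\Delta(\algebra)j_2(\algebra)\}$ is a closed subspace containing the generators, and it is closed under multiplication by a homogeneous $y\in S$ because Proposition~\ref{8.44} gives $j_2(\algebra)j_1(y)=j_1(y)j_2(\algebra)\subseteq\Delta(\algebra)j_2(\algebra)j_2(\algebra)$; hence $S$ contains all monomials and equals~$\algebra$. This is the paper's three-line argument, and the commutation statement of Proposition~\ref{8.44} (one factor homogeneous suffices) is precisely the ingredient you assumed was missing. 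Your substitute --- a braided Hopf $^*$-algebra structure on the dense polynomial subalgebra $\skrA$ with counit and antipode, followed by bijectivity of the Galois maps --- is the standard algebraic proof, but as written it is not justified: an antipode is not a $^*$\nb-homomorphism, so Theorem~\ref{universal} gives you no map $S$ on $\skrA$, and ``verifying the axioms on the generators'' does not establish that $S(u_{ij})=u_{ji}^*$ extends to a well-defined braided antihomomorphism of the subalgebra $\skrA\subseteq\algebra$ (one would need, e.g., that the monomials $\alpha^k\gamma^l\gamma^{*m}$ are linearly independent in~$\algebra$, i.e.\ that $\skrA$ is the universal $^*$-algebra on the relations --- true, but nowhere proved here and not a triviality). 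Until that is supplied, the inverse of your Galois maps is not defined, and part~(3) is open in your write-up. I would recommend replacing this step by the direct argument via Proposition~\ref{8.44}.
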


We also describe some important features of the representation theory
of~$\SU_q(2)$ to explain the definition of~\(\SU_q(2)\), and we
relate~\(\SU_q(2)\) to the quantum \(\nSU(2)\) groups defined by Zhang
and Zhao in~\cite{Zhang-Zhao:Uq2}.

Braided Hopf algebras that deform \(\textup{SL}(2,\zesp)\) are
already described in~\cite{Majid:Examples_braided}.  We could,
however, find no precise relationship between Majid's braided Hopf
algebra \(\textup{BSL}(2)\) and our braided compact quantum
group~\(\SU_q(2)\).

\section{The algebra of \texorpdfstring{$\SU_{q}(2)$}{SUq(2)}}

The following elementary lemma explains what the defining
relations~\eqref{SU2q} mean:

\begin{Lem}
  \label{un}
  Two elements $\alpha$ and~$\gamma$ of a $\cst$\nb-algebra satisfy
  the relations~\eqref{SU2q} if and only if the following matrix is
  unitary:
  \[
  \begin{pmatrix}\alpha&-q\gamma^{*}\\\gamma&\alpha^{*}\end{pmatrix}
  \]
\end{Lem}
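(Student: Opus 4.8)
The plan is to compute the product of the matrix $U=\begin{pmatrix}\alpha&-q\gamma^{*}\\\gamma&\alpha^{*}\end{pmatrix}$ with its adjoint in both orders and match the entries against the five relations in~\eqref{SU2q}. First I would write out $U^{*}U=\I_2$ and $UU^{*}=\I_2$ as systems of four scalar equations each (the two off-diagonal equations in each system being adjoints of one another, so effectively three equations per system). The $(1,1)$ entry of $U^{*}U$ gives $\alpha^{*}\alpha+\gamma^{*}\gamma=\I$, which is the first relation in~\eqref{SU2q}. The $(2,2)$ entry of $U^{*}U$ gives $\modul{q}^{2}\gamma\gamma^{*}+\alpha\alpha^{*}=\I$; combined with the $(2,2)$ entry of $UU^{*}$, namely $\gamma\gamma^{*}+\alpha^{*}\alpha=\I$, subtracting yields $\modul{q}^{2}\gamma\gamma^{*}-\gamma^{*}\gamma=\gamma\gamma^{*}-\gamma^{*}\gamma$ after using the first relation, hence $(\modul{q}^2-1)\gamma\gamma^{*}=(\modul q^2-1)\gamma^{*}\gamma$; but I should be more careful and instead derive the normality relation $\gamma\gamma^{*}=\gamma^{*}\gamma$ directly by comparing the $(1,1)$ entries of $U^{*}U$ and $UU^{*}$: the former is $\alpha^{*}\alpha+\gamma^{*}\gamma$ and the latter is $\alpha\alpha^{*}+\modul q^{2}\gamma^{*}\gamma$, wait — let me recompute. $UU^{*}$ has $(1,1)$ entry $\alpha\alpha^{*}+\modul{q}^{2}\gamma^{*}\gamma$ and $(2,2)$ entry $\gamma\gamma^{*}+\alpha^{*}\alpha$. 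So $U^{*}U=\I_2$ gives relations $\alpha^{*}\alpha+\gamma^{*}\gamma=\I$ and $\alpha\alpha^{*}+\modul q^2\gamma\gamma^{*}=\I$ and the off-diagonal $-\alpha^{*}\gamma^{*}q+\gamma^{*}\alpha^{*}$... here it is cleanest to be systematic, so I will just tabulate all eight entries once and read off equivalences.

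The off-diagonal entries are where the commutation relations $\alpha\gamma=\qbar\gamma\alpha$ and $\alpha\gamma^{*}=q\gamma^{*}\alpha$ come from. Specifically, the $(1,2)$ entry of $UU^{*}$ is $-q\alpha\gamma+q\gamma^{*}\alpha^{*}\cdot$... I would carefully expand: row~1 of $U$ times column~2 of $U^{*}$ (where $U^{*}=\begin{pmatrix}\alpha^{*}&\gamma^{*}\\-\qbar\gamma&\alpha\end{pmatrix}$) gives $\alpha\gamma^{*}+(-q\gamma^{*})\alpha = \alpha\gamma^{*}-q\gamma^{*}\alpha$, so vanishing of this entry is exactly the fifth relation $\alpha\gamma^{*}=q\gamma^{*}\alpha$. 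Similarly the $(2,1)$ entry of $U^{*}U$ produces the fourth relation after taking adjoints. The remaining entries either reproduce relations already obtained or follow from them together with the normality of~$\gamma$; in particular the $(2,2)$ entry of $UU^{*}$, $\gamma\gamma^{*}+\alpha^{*}\alpha$, combined with the first relation forces $\gamma\gamma^{*}=\gamma^{*}\gamma$, giving the third relation. This shows the unitarity of~$U$ implies all of~\eqref{SU2q}.

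For the converse, assuming~\eqref{SU2q}, I substitute the relations back into the eight entries of $U^{*}U$ and $UU^{*}$ and check each equals the corresponding entry of~$\I_2$; this is the same computation run backwards, using the third relation (normality of~$\gamma$) to reconcile $\modul q^2\gamma\gamma^{*}$ with $\modul q^2\gamma^{*}\gamma$ where needed, and the commutation relations to cancel the off-diagonal terms. I expect no genuine obstacle here — the only mild subtlety is bookkeeping: making sure that each of the five relations is actually used and that the factor $\modul q^{2}=q\qbar$ appears consistently (it enters through $(-q\gamma^{*})(-\qbar\gamma)=\modul q^{2}\gamma^{*}\gamma$ in the matrix products). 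I would present the argument compactly by exhibiting the four matrix products $U^{*}U$ and $UU^{*}$ with entries written in terms of $\alpha,\gamma$, then observing that setting them equal to~$\I_2$ is equivalent, entry by entry, to~\eqref{SU2q}.
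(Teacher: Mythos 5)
Your proposal is correct: expanding $U^{*}U$ and $UU^{*}$ entrywise and matching against \eqref{SU2q} is exactly the routine verification the paper leaves unwritten (the lemma is stated without proof as ``elementary''), and your final tabulation of the entries is accurate, including the key point that normality of $\gamma$ comes from comparing the $(1,1)$ entry of $U^{*}U$ with the $(2,2)$ entry of $UU^{*}$ rather than from the $|q|^{2}$ equations (which would fail to give it when $|q|=1$). The only cosmetic issue is the visible false starts mid-proof; in a final write-up simply present the eight entries once and read off the equivalence.
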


There are at least two ways to introduce a $\cst$\nb-algebra with
given generators and relations.  One may consider the
algebra~$\mathcal{\algebra}$ of all non-commutative polynomials in the
generators and their adjoints and take the largest $\cst$\nb-seminorm
on~$\mathcal{\algebra}$ vanishing on the given relations.  The
set~$\Nideal$ of elements with vanishing seminorm is an ideal
in~$\mathcal{\algebra}$.  The seminorm becomes a norm
on~$\mathcal{\algebra}/\Nideal$.  Completing
$\mathcal{\algebra}/\Nideal$ with respect to this norm gives the
desired $\cst$\nb-algebra~$A$.  Another way is to consider the
operator domain consisting of all families of operators satisfying the
relations.  Then~$\algebra$ is the algebra of all continuous operator
functions on that domain
(see~\cite{Kruszynski-Woronowicz:Gelfand-Naimark}).  Applying one of
these procedures to the relations~\eqref{SU2q} gives a
$\cst$\nb-algebra~$\algebra$ with two distinguished elements
$\alpha,\gamma\in \algebra$ that is universal in the following sense:

\begin{Thm}
  \label{universal}
  Let~$\widetilde{\algebra}$ be a $\cst$\nb-algebra with two elements
  $\alphatilde,\gammatilde\in\widetilde{\algebra}$ satisfying
  \[
  \etyk{SU2qtil}
  \left\{
    \begin{array}{r@{\;=\;}l}
      \tilde\alpha^{*}\tilde\alpha+\tilde\gamma^{*}\tilde\gamma&\I,\\
      \tilde\alpha\tilde\alpha^{*}+\modul{q}^{2}\tilde\gamma^{*}\tilde\gamma&\I,\\
      \tilde\gamma\tilde\gamma^{*}&\tilde\gamma^{*}\tilde\gamma,\\
      \tilde\alpha\tilde\gamma&\qbar\tilde\gamma\tilde\alpha,\\
      \tilde\alpha\tilde\gamma^{*}&q\tilde\gamma^{*}\tilde\alpha.
    \end{array}
  \right.
  \]
  Then there is a unique morphism
  $\rho\in\Mor(\algebra,\widetilde{\algebra})$ with
  \(\rho(\alpha)=\alphatilde\) and \(\rho(\gamma)=\gammatilde\).\qed
\end{Thm}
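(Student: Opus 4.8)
The plan is to deduce $\rho$ from the universal property that is built into the construction of~$\algebra$; the one point that carries real content is that the defining relations~\eqref{SU2q} force the generators to be bounded in norm, so that the largest $\cst$\nb-seminorm on the free $^*$-algebra is finite, and this is exactly what Lemma~\ref{un} supplies.

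First I would fix notation for the construction. Let $\mathcal{A}_0$ be the unital free $^*$-algebra on two generators $a,c$, and let $p_1,\dots,p_5\in\mathcal{A}_0$ be obtained by moving each of the relations in~\eqref{SU2q} to one side, with $a$ in place of~$\alpha$ and $c$ in place of~$\gamma$. For a $\cst$\nb-algebra~$B$ and elements $x,y\in\M(B)$, the unique unital $^*$-homomorphism $\phi\colon\mathcal{A}_0\to\M(B)$ with $\phi(a)=x$, $\phi(c)=y$ annihilates $p_1,\dots,p_5$ if and only if $x,y$ satisfy~\eqref{SU2q}; by Lemma~\ref{un} this holds precisely when the matrix $\left(\begin{smallmatrix}x&-qy^{*}\\y&x^{*}\end{smallmatrix}\right)$ is unitary in $M_2(\M(B))$, and an entry of a unitary has norm at most~$1$, so $\norm{x}\le1$ and $\norm{y}\le1$. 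Hence every $\cst$\nb-seminorm on $\mathcal{A}_0$ vanishing on $p_1,\dots,p_5$ is bounded by~$1$ on $a$ and~$c$, hence finite on all of~$\mathcal{A}_0$. Therefore the supremum of $\norm{\phi(f)}$ over all such pairs $(B,\phi)$ defines a genuine $\cst$\nb-seminorm $\norm{\cdot}_{\mathrm{univ}}$ on~$\mathcal{A}_0$; its kernel~$N$ is a two-sided ideal, $\mathcal{A}_0/N$ carries a $\cst$\nb-norm, and its completion is the unital $\cst$\nb-algebra~$\algebra$, with $\alpha,\gamma$ the images of $a,c$. By construction $\alpha,\gamma$ satisfy~\eqref{SU2q}, the $^*$-subalgebra of~$\algebra$ they generate is dense, and any unital $^*$-homomorphism from~$\mathcal{A}_0$ into some $\M(B)$ sending $a,c$ to a pair satisfying~\eqref{SU2q} factors uniquely through a unital $^*$-homomorphism $\algebra\to\M(B)$.

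Now let $\widetilde{\algebra}$ be as in the statement. Since~\eqref{SU2qtil} is exactly~\eqref{SU2q} for the pair $(\alphatilde,\gammatilde)$ viewed in $\M(\widetilde{\algebra})$ (this is also the natural place to read the unit~$\I$ occurring in~\eqref{SU2qtil} when $\widetilde{\algebra}$ is non-unital), the universal property yields a unital $^*$-homomorphism $\rho\colon\algebra\to\M(\widetilde{\algebra})$ with $\rho(\alpha)=\alphatilde$ and $\rho(\gamma)=\gammatilde$. A unital $^*$-homomorphism into $\M(\widetilde{\algebra})$ is automatically a morphism in the sense used here, because $\rho(\algebra)\widetilde{\algebra}\supseteq\rho(\I)\widetilde{\algebra}=\widetilde{\algebra}$; thus $\rho\in\Mor(\algebra,\widetilde{\algebra})$. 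For uniqueness, any $\rho'\in\Mor(\algebra,\widetilde{\algebra})$ with $\rho'(\alpha)=\alphatilde$, $\rho'(\gamma)=\gammatilde$ agrees with~$\rho$ on the dense $^*$-subalgebra generated by $\alpha$ and~$\gamma$, and both maps are norm-continuous, so $\rho'=\rho$.

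I do not expect a serious obstacle: the argument is the standard mechanism turning bounded generator–relation data into a universal $\cst$\nb-algebra. The only step with genuine content is the norm bound $\norm{\alpha},\norm{\gamma}\le1$ that makes $\norm{\cdot}_{\mathrm{univ}}$ finite, and that reduces verbatim to the unitarity criterion of Lemma~\ref{un}. The only minor subtleties are the passage through $\M(\widetilde{\algebra})$ needed to interpret~$\I$ in the non-unital case, and the remark that a unital $^*$-homomorphism to a multiplier algebra is nondegenerate.
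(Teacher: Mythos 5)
Your proposal is correct and follows essentially the same route as the paper: the paper states Theorem~\ref{universal} without proof because it is immediate from the universal construction sketched in the preceding paragraph (largest \(\cst\)\nb-seminorm on the free \(^*\)\nb-algebra modulo the relations), and your write-up simply fills in that construction, including the one substantive point that the first relation (equivalently, unitarity of the matrix in Lemma~\ref{un}) bounds \(\norm{\alpha}\) and \(\norm{\gamma}\) by \(1\) so the universal seminorm is finite. The remarks on reading~\(\I\) in \(\M(\widetilde{\algebra})\) and on nondegeneracy of unital maps into multiplier algebras are accurate and harmless additions.
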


The elements $\alphatilde=\I_{\C(\mathbb{T})}\tens \alpha$ and
$\gammatilde=z\tens\gamma$ of $\C(\mathbb{T})\tens\algebra$
satisfy~\eqref{SU2qtil}.  Here $z\in\C(\mathbb{T})$ denotes the
coordinate function on~$\mathbb{T}$.  (Later, we also denote
elements of~\(\mathbb{T}\) by~\(z\).)  Theorem~\ref{universal} gives
a unique morphism
$\rho^A\in\Mor(\algebra,\C(\mathbb{T})\tens\algebra)$ with
\[
\etyk{rho}
\begin{array}{r@{\;=\;}r}
\rho(\alpha)&\I_{\C(\mathbb{T})}\tens\alpha,\\
\rho(\gamma)&z\tens\gamma.
\end{array}
\]
This is a continuous \(\mathbb{T}\)\nb-action, so we may view
$(A,\rho^A)$ as an object in the category~$\mathcal{C}^*_{\mathbb{T}}$
described in detail in the next section.

\begin{Thm}
  \label{the:compare_q}
  The $\cst$\nb-algebras~$\algebra$ for different~\(q\) with
  \(\modul{q}\neq0,1\) are isomorphic.
\end{Thm}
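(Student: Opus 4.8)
The plan is to prove the stronger statement that $\algebra_q\iso\algebra_{q'}$ whenever $\modul q,\modul{q'}\notin\{0,1\}$, by exhibiting explicit isomorphisms, all produced from the universal property in Theorem~\ref{universal} by writing down elements that satisfy the relations~\eqref{SU2q}. First I would reduce to the case $0<\modul q<1$: if $\alpha,\gamma$ satisfy~\eqref{SU2q} then the matrix $U$ of Lemma~\ref{un} is unitary, hence so is $U^{*}$, and putting $U^{*}$ into the shape of Lemma~\ref{un} shows that $(\alpha^{*},-\qbar\gamma)$ satisfies~\eqref{SU2q} with $q$ replaced by $q^{-1}$. Thus Theorem~\ref{universal} yields a morphism $\algebra_{q^{-1}}\to\algebra_q$; the symmetric construction gives a morphism $\algebra_q\to\algebra_{q^{-1}}$, and evaluating the two composites on the generators shows that they are the identities. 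Hence $\algebra_q\iso\algebra_{q^{-1}}$, and from now on $0<\modul q<1$ (and likewise $\modul{q'}$).

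The crucial point is that for $0<\modul q<1$ the positive element $N:=\gamma^{*}\gamma\in\algebra_q$ satisfies $\Sp N\subseteq\{0\}\cup\{\modul q^{2n}:n\ge0\}$. Multiplying the last two relations of~\eqref{SU2q} gives $\alpha N=\modul q^{2}N\alpha$, so in a faithful representation $\alpha$ carries the spectral subspace of $N$ for a set $J$ into that for $\modul q^{-2}J$; the first relation forces $0\le N\le\I$ and identifies $\ker\alpha$ with the eigenspace of $N$ at $1$, so an induction starting from the interval $(\modul q^{2},1]$ shows $\Sp N\cap(\modul q^{2n+2},\modul q^{2n})=\emptyset$ for every $n\ge0$. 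Hence each $\modul q^{2n}$ is isolated in $\Sp N$, the spectral projections $e_n:=\chi_{\{\modul q^{2n}\}}(N)$ lie in $\algebra_q$, one has $\norm{\gamma e_n}\le\modul q^{n}$, and $\alpha e_0=0$, $\alpha e_n=e_{n-1}\alpha$ for $n\ge1$. (For real $q$ all of this is part of Woronowicz' analysis of $\C(\SU_q(2))$.)

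Now write $\alpha',\gamma'$ for the generators of $\algebra_{q'}$, put $N'=\gamma'^{*}\gamma'$, $e'_n=\chi_{\{\modul{q'}^{2n}\}}(N')$, $p=\modul q$, $p'=\modul{q'}$, and define $\Phi\in\Mor(\algebra_q,\algebra_{q'})$ by
\[
\Phi(\alpha)=\alpha'\,g(N'),\qquad
\Phi(\gamma)=\sum_{n\ge0}\bigl(\qbar/\overline{q'}\bigr)^{n}\gamma'e'_n ,
\]
where $g$ is the continuous function on $\Sp N'$ with $g(p'^{2n})=\bigl((1-p^{2n})/(1-p'^{2n})\bigr)^{1/2}$ for $n\ge1$ and $g(0)=1$; the series converges in norm because its $n$\nb-th term has norm $(p/p')^{n}p'^{n}=p^{n}$. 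Using $\alpha'N'=p'^{2}N'\alpha'$, the relation $\alpha'e'_n=e'_{n-1}\alpha'$ and the defining relations of $\algebra_{q'}$, a direct computation shows that $\Phi(\alpha),\Phi(\gamma)$ satisfy~\eqref{SU2q}, so $\Phi$ exists by Theorem~\ref{universal}. The construction with $q$ and $q'$ interchanged gives $\Psi\in\Mor(\algebra_{q'},\algebra_q)$; since a morphism commutes with continuous functional calculus one has $\Psi(N')=\sum_n p'^{2n}e_n$, hence $\Psi(e'_n)=e_n$, and evaluating $\Psi\comp\Phi$ and $\Phi\comp\Psi$ on the generators (where the rescalings $g$ and the rephasings cancel) shows that they are the identities. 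So $\algebra_q\iso\algebra_{q'}$, which together with the first reduction proves the theorem.

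The one non-formal ingredient is the spectral statement $\Sp(\gamma^{*}\gamma)\subseteq\{0\}\cup\{\modul q^{2n}:n\ge0\}$: it is what makes the projections $e_n$—and with them the explicit formulas for $\Phi$ and $\Psi$—available, and, although classical for real $q$, it must here be extracted for all $q$ directly from~\eqref{SU2q}. Once it is in place, the remaining verifications are routine bookkeeping with the commutation relation $\alpha\gamma^{*}\gamma=\modul q^{2}\gamma^{*}\gamma\,\alpha$.
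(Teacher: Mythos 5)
Your proof is correct, but it takes a genuinely different route from the paper's for the main step. The paper also begins with the reduction to $0<\modul{q}<1$ via $q\mapsto q^{-1}$ (with the slightly different assignment $\alpha\mapsto\alpha^*$, $\gamma\mapsto q^{-1}\gamma$; yours, read off from $U^*$, is equally valid). But then, instead of analysing the spectrum of $\gamma^*\gamma$, the paper exploits that~$\gamma$ is normal with $\lVert\gamma\rVert\le1$ and applies the functional calculus with the disc homeomorphism $g(\lambda)=\lambda e^{i\theta\log_{\modul{q}}\modul{\lambda}}$, which satisfies $g(\qbar\lambda)=\modul{q}\,g(\lambda)$ and $\modul{g(\lambda)}=\modul{\lambda}$; replacing $\gamma$ by $g(\gamma)$ rotates the phase of~$q$ away without changing $\modul{\gamma}$, giving $\algebra_q\iso\algebra_{\modul{q}}$ in one stroke, and the real case $0<q<1$ is then delegated to Woronowicz's Theorem~A2.2 in \cite{Woronowicz:Twisted_SU2}. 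What you do instead is, in effect, to inline that cited theorem and generalise its proof to complex parameters: you establish $\Sp(\gamma^*\gamma)\subseteq\{0\}\cup\{\modul{q}^{2n}:n\ge0\}$ directly from~\eqref{SU2q} (your induction is sound; the cleanest phrasing is that $\Sp(\alpha\alpha^*)\cup\{0\}=\Sp(\alpha^*\alpha)\cup\{0\}$ forces $\Sp N\subseteq \modul{q}^2\Sp N\cup\{1\}$, which iterates to the claim) and then build the intertwiner from the spectral projections~$e_n$. I checked your formulas for $\Phi(\alpha)$ and $\Phi(\gamma)$: the phase $(\qbar/\overline{q'})^n$ is exactly what is needed to turn $\alpha'\gamma'=\overline{q'}\gamma'\alpha'$ into $\Phi(\alpha)\Phi(\gamma)=\qbar\,\Phi(\gamma)\Phi(\alpha)$ after the index shift $\alpha'e'_n=e'_{n-1}\alpha'$, and the two unitarity relations reduce to $g^2(\lambda)(1-\lambda)+h(\lambda)=1$ and its shift by $p'^2$, which hold pointwise on the spectrum. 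Your approach buys a self-contained proof that never passes through the real case; the paper's buys brevity and avoids the spectral analysis entirely, at the cost of a citation. Two small loose ends you should tie up: define $g$ at the isolated point $1\in\Sp N'$ (any value works, since $1-N'$ and $\alpha'e'_0$ both vanish there, but $g$ must be a well-defined continuous function to invoke the functional calculus), and in the verification $\Psi\comp\Phi=\id$ note explicitly that $\sum_n\gamma e_n=\gamma$ because $\gamma\,\chi_{\{0\}}(N)=0$, which follows from $\lVert\gamma\,\chi_{\{0\}}(N)\rVert^2=\lVert\chi_{\{0\}}(N)N\chi_{\{0\}}(N)\rVert=0$.
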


\begin{proof}
  During this proof, we write~\(\algebra_q\) for our
  \(\cst\)\nb-algebra with parameter~\(q\).

  First, \(\algebra_q \cong \algebra_{q'}\) for \(q'=
  q^{-1}\) by mapping \(\algebra_q\ni \alpha\mapsto \alpha'=
  \alpha^*\in \algebra_{q'}\) and \(\algebra_q\ni \gamma\mapsto
  \gamma' = q^{-1} \gamma \in \algebra_{q'}\).  Routine computations
  show that \(\alpha'\) and~\(\gamma'\) satisfy the
  relations~\eqref{SU2q}, so that Theorem~\ref{universal} gives a
  unique morphism \(\algebra_q\to\algebra_{q'}\) mapping
  \(\alpha\mapsto\alpha'\) and \(\gamma\mapsto\gamma'\).  Doing this
  twice gives \(q''=q\), \(\alpha''=\alpha\) and \(\gamma''=\gamma\),
  so we get an inverse for the morphism
  \(\algebra_q\to\algebra_{q'}\).  This completes the first step.  It
  reduces to the case \(0<\modul{q}<1\), which we assume from now on.

  Secondly, we claim that \(\algebra_q \cong \algebra_{\modul{q}}\) if
  \(0<\modul{q}<1\).  Equation~\eqref{SU2q} implies that~$\gamma$ is
  normal with \(\lVert\gamma\rVert \le 1\).  So we may use the
  functional calculus for continuous functions on the closed unit disc
  $\mathbb{D}^{1}=\set{\lambda\in\zesp}{\modul{\lambda}\leq 1}$.

  We claim that
  \[
  \etyk{Halmosh}
  \alpha f(\gamma)=f(\qbar\gamma)\alpha
  \]
  for all $f\in\C(\mathbb{D}^{1})$.  Indeed, the set $B\subseteq
  \C(\mathbb{D}^1)$ of functions satisfying~\eqref{Halmosh} is a
  norm-closed, unital subalgebra of~$\C(\mathbb{D}^{1})$.  The last
  two equations in~\eqref{SU2q} say that~$B$ contains the functions
  $f(\lambda)=\lambda$ and $f^*(\lambda)=\lambdabar$.  Since these
  separate the points of~$\mathbb{D}^{1}$, the Stone--Weierstrass
  Theorem gives $B=\C(\mathbb{D}^{1})$.

  Let $q=e^{i\theta}\modul{q}$ be the polar decomposition of~$q$.
  For $\lambda\in \mathbb{D}^{1}$, let
  \[
  g(\lambda)=
  \begin{cases}
    \lambda
    \textup{e}^{\textup{i}\theta\log_{\modul{q}}\modul{\lambda}}&
    \text{for }\lambda\neq 0,\\
    0&\text{for }\lambda= 0.
  \end{cases}
  \]
  This is a homeomorphism of~$\mathbb{D}^{1}$ because we get the map~\(g^{-1}\)
  if we replace~$\theta$ by~$-\theta$.  Thus $\gamma$ and
  $\gamma'=g(\gamma)$ generate the same $\cst$\nb-algebra.  We also
  get $g(\qbar\lambda)=\modul{q}g(\lambda)$, so inserting $f=g$ and
  $f=\gbar$ in~\eqref{Halmosh} gives
  \[
  a\gamma' =\modul{q}\gamma'\alpha,\qquad
  a(\gamma')^{*} = \modul{q} (\gamma')^{*}\alpha.
  \]
  Moreover, $\modul{g(\lambda)}=\modul{\lambda}$ and hence
  $\modul{\gamma'}=\modul{\gamma}$.  Thus we may replace~$\gamma$
  by~$\gamma'$ in the first three equations of~\eqref{SU2q}.  As a
  result, $\alpha$ and $\gamma'$ satisfy the relations~\eqref{SU2q}
  with~$\modul{q}$ instead of~$q$.  Since~\(g\) is a homeomorphism, an
  argument as in the first step now shows that $\algebra_q\cong
  \algebra_{\modul{q}}$.
  Finally, \cite{Woronowicz:Twisted_SU2}*{Theorem A2.2, page 180}
  shows that the \(\cst\)\nb-algebras~$\algebra_q$ for \(0<q<1\) are
  isomorphic.
\end{proof}

\section{Monoidal structure on
  \texorpdfstring{$\mathbb{T}$-$\cst$}{T-C*}-algebras}
\label{tcat}

We are going to describe the monoidal category
$(\mathcal{C}^*_{\mathbb{T}},\Tens_{\zeta})$ for
$\zeta\in\mathbb{T}$ that is the framework for our braided quantum
groups.  Monoidal categories are defined
in~\cite{MacLane:Categories}.

The \(\cst\)\nb-algebra \(\C(\mathbb{T})\) is a compact quantum
group with comultiplication
\[
\delta\colon \C(\mathbb{T}) \to \C(\mathbb{T}) \tens \C(\mathbb{T}),
\qquad
z\mapsto z\tens z.
\]

An object of $\mathcal{C}^*_{\mathbb{T}}$ is, by definition, a
pair~$(X,\rho^X)$ where~$X$ is a $\cst$\nb-algebra and
$\rho^{X}\in\Mor(X,\C(\mathbb{T})\tens X)$ makes the diagram
\[\etyk{dziacop}
\begin{gathered}
  \xymatrix{
    X\ar[rr]^{\rho^{X}}\ar[d]_{\rho^{X}}&&
    \C(\mathbb{T})\tens X\ar[d]^{\delta\tens\id}
    \\
    \C(\mathbb{T})\tens X
    \ar[rr]_-{\id_{\C(\mathbb{T})}\tens\rho^{X}}&&
    \C(\mathbb{T})\tens\C(\mathbb{T})\tens
    X }
\end{gathered}
\]
commute and satisfies the \emph{Podle\'s condition}
\[\etyk{Podl}
\rho^{X}(X)(\C(\mathbb{T})\tens\I_{X})=\C(\mathbb{T})\tens X.
\]
This is equivalent to a continuous $\mathbb{T}$\nb-action on~$X$ by
\cite{Soltan:Non_cpt_grp_act}*{Proposition 2.3}.

Let $X,Y$ be $\mathbb{T}$\nb-\(\cst\)-algebras.  The set of morphisms
from~$X$ to~$Y$ in~$\mathcal{C}^*_{\mathbb{T}}$ is the set
$\Mor_{\mathbb{T}}(X,Y)$ of \(\mathbb{T}\)\nb-equivariant morphisms
\(X\to Y\).  By definition, $\varphi\in\Mor(X,Y)$ is
\(\mathbb{T}\)\nb-equivariant if and only if the following diagram
commutes:
\[\etyk{tmor}
\begin{gathered}
  \xymatrix{
    X\ar[r]^-{\rho^{X}}\ar[d]_{\varphi}&
    \C(\mathbb{T})\tens
    X\ar[d]^{\id_{\C(\mathbb{T})}\tens\varphi}
    \\
    Y\ar[r]_-{\rho^{Y}}&
    \C(\mathbb{T})\tens Y }
\end{gathered}
\]

Let $X\in\mathcal{C}^*_{\mathbb{T}}$.  An element $x\in X$ is
\emph{homogeneous of degree $n\in\inte$} if
\begin{equation}
  \label{homel}
  \rho^{X}(x)=z^{n}\tens x.
\end{equation}
The degree of a homogeneous element~$x$ is denoted by
$\deg(x)$.  Let~$X_{n}$ be the set of homogeneous elements of~$X$ of
degree~$n$.  This is a closed linear subspace of~$X$, and
$X_{n}X_{m}\subseteq X_{n+m}$ and $X_{n}^{*}=X_{-n}$ for
\(n,m\in\inte\).  Moreover, finite sums of homogeneous elements are
dense in~$X$.

Let $\zeta\in\torus$.  The monoidal functor $\Tens_{\zeta}\colon
\mathcal{C}^*_{\mathbb{T}}\times
\mathcal{C}^*_{\mathbb{T}}\rightarrow \mathcal{C}^*_{\mathbb{T}}$ is
introduced as in~\cite{Meyer-Roy-Woronowicz:Twisted_tensor}.  We
describe $X\Tens_\zeta Y$ using quantum tori.  By definition, the
$\cst$\nb-algebra $\C(\torus^{2}_{\zeta})$ of the quantum torus is
the $\cst$\nb-algebra generated by two unitary elements~$U,V$
subject to the relation \(UV=\zeta\, VU\).

There are unique injective morphisms
$\iota_1,\iota_2\in\Mor(\C(\mathbb{T}),\C(\mathbb{T}^{2}_{\zeta}))$
with $\iota_1(z) = U$ and $\iota_2(z) = V$.  Define
$j_1\in\Mor(X,\C(\mathbb{T}^{2}_{\zeta})\otimes X\otimes Y)$ and
$j_2\in\Mor(Y,\C(\mathbb{T}^{2}_{\zeta})\otimes X\otimes Y)$ by
\begin{alignat*}{2}
  j_1(x) &= (\iota_1\otimes\id_X)\circ\rho^X(x)
  &\qquad &\text{for all }x\in X,\\
  j_2(y) &= (\iota_2\otimes\id_Y)\circ\rho^Y(y)
  &\qquad &\text{for all }y\in Y.
\end{alignat*}

Let $x\in X_k$ and $y\in Y_l$.  Then $j_1(x) = U^k\otimes x\otimes 1$ and
$j_2(y) = V^l\otimes 1\otimes y$, so that we get the commutation
relation~\eqref{com}.  This implies $j_1(X)j_2(Y) =j_2(Y) j_1(X)$, so
that $j_1(X)j_2(Y)$ is a $\cst$\nb-algebra.  We define
\[
X\Tens_{\zeta} Y = j_1(X)j_2(Y).
\]
This construction agrees with the one
in~\cite{Meyer-Roy-Woronowicz:Twisted_tensor} because
\(\C(\mathbb{T}^{2}_{\zeta}) \cong \C(\mathbb{T}) \Tens_{\zeta}
\C(\mathbb{T})\), see also the end of
\cite{Meyer-Roy-Woronowicz:Twisted_tensor}*{Section 5.2}.

There is a unique continuous $\mathbb{T}$\nb-action
$\rho^{X\Tens_\zeta Y}$ on~\(X\Tens_\zeta Y\) for which \(j_1\)
and~\(j_2\) are \(\mathbb{T}\)\nb-equivariant, that is,
$j_1\in\Mor_{\mathbb{T}}(X,X\Tens_{\zeta}Y)$ and
$j_2\in\Mor_{\mathbb{T}}(Y,X\Tens_{\zeta}Y)$.  This action is
constructed in a more general context
in~\cite{Meyer-Roy-Woronowicz:Twisted_tensor_2}.  We always
equip~$X\Tens_\zeta Y$ with this \(\mathbb{T}\)\nb-action and thus
view it as an object of~$\mathcal{C}^*_{\mathbb{T}}$.

The construction~\(\Tens_\zeta\) is a bifunctor; that is,
\(\mathbb{T}\)\nb-equivariant morphisms
$\pi_1\in\Mor_{\mathbb{T}}(X_1,Y_1)$ and
$\pi_2\in\Mor_{\mathbb{T}}(X_2,Y_2)$ induce a unique
\(\mathbb{T}\)\nb-equivariant morphism
$\pi_1\Tens_\zeta\pi_2\in\Mor_{\mathbb{T}}(X_1\Tens_\zeta
X_2,Y_1\Tens_\zeta Y_2)$ with
\begin{equation}
  \label{funct}
  (\pi_1\Tens_\zeta\pi_2)(j_{X_1}(x_1)j_{X_2}(x_2))
  = j_{Y_1}(\pi_1(x_1))j_{Y_2}(\pi_2(x_2))
\end{equation}
for all $x_1\in X_1$ and $x_2\in X_2$.

\begin{Prop}
  \label{8.44}
  Let $x\in X$ and $y\in Y$ be homogeneous elements.  Then
  \begin{align*}
    j_{1}(x)j_{2}(Y)&= j_{2}(Y)j_{1}(x),\\
    j_{1}(X)j_{2}(y)&= j_{2}(y)j_{1}(X).
  \end{align*}
\end{Prop}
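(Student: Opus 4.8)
The plan is to reduce the statement to the elementary commutation relation~\eqref{com} for pairs of homogeneous elements, exploiting that finite sums of homogeneous elements are dense together with continuity of multiplication and of the morphisms $j_1,j_2$. I would prove the first identity; the second follows by the symmetric argument. Fix $x\in X_k$ homogeneous of degree~$k$. For every homogeneous $y\in Y_l$, relation~\eqref{com} reads $j_1(x)j_2(y)=\zeta^{kl}j_2(y)j_1(x)$, and since $\zeta^{kl}$ is a nonzero scalar this shows at once that $j_1(x)j_2(y)\in j_2(Y)j_1(x)$ and $j_2(y)j_1(x)\in j_1(x)j_2(Y)$ for all homogeneous~$y$.

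To pass from homogeneous~$y$ to arbitrary $y\in Y$, I would use that finite sums of homogeneous elements are dense in~$Y$ and that $j_2$ is contractive, so that the closed subspace $j_2(Y)$ is the closed linear span of $\set{j_2(y)}{y\text{ homogeneous}}$; since left and right multiplication by the fixed element $j_1(x)$ are bounded, one obtains
\begin{align*}
  j_1(x)\,j_2(Y)
  &=\overline{\operatorname{span}}\,\set{j_1(x)j_2(y)}{y\text{ homogeneous}}\\
  &=\overline{\operatorname{span}}\,\set{j_2(y)j_1(x)}{y\text{ homogeneous}}
  =j_2(Y)\,j_1(x),
\end{align*}
where the middle equality is the previous step applied term by term. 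Exchanging the roles of~$X$ and~$Y$, that is, fixing the homogeneous element on the right and approximating elements of~$X$ by sums of homogeneous elements, yields the second identity $j_1(X)j_2(y)=j_2(y)j_1(X)$.

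There is no real obstacle here; the only point requiring a little care is the passage to closed linear spans, i.e.\ checking that the closure defining $j_1(x)j_2(Y)$ is already attained on the dense set of homogeneous~$y$, which is immediate from continuity of $j_2$ and of multiplication by the fixed element $j_1(x)$.
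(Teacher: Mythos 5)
Your proof is correct and follows essentially the same route as the paper: both arguments come down to the commutation relation~\eqref{com} together with the density of finite sums of homogeneous elements. The only cosmetic difference is that the paper absorbs the scalar $\zeta^{kl}$ into the automorphism $\rho^X_{\zeta^{\deg(y)}}$ (resp.\ $\rho^Y_{\zeta^{\deg(x)}}$), so that surjectivity of this automorphism yields the set equality in one step, whereas you obtain the same equality by the two inclusions between closed linear spans.
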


\begin{proof}
  Equation~\eqref{com} shows that
  \[
  j_{1}(x)j_{2}(y)=j_{2}(y)j_{1}(\rho^X_{\zeta^{\deg(y)}}(x))
  \]
  for any $x\in X$ and any homogeneous $y\in Y$.
  Since~\(\rho^X_{\zeta^{\deg(y)}}\) is an automorphism of~\(X\),
  this implies \(j_1(X)j_2(y) = j_2(y) j_1(X)\).  Similarly,
  \(j_{1}(x)j_{2}(y) = j_{2}(\rho^Y_{\zeta^{\deg(x)}}(y))j_{1}(x)\)
  for homogeneous $x\in X$ and any $y\in Y$ implies
  \(j_{1}(x)j_{2}(Y) = j_{2}(Y)j_{1}(x)\).
\end{proof}

\section{Proof of the main theorem}

Let $\alpha$ and~$\gamma$ be the distinguished elements
of~$\algebra$.  Let $\alphatilde$ and~$\gammatilde$ be the elements
of $\algebra\Tens_\zeta \algebra$ appearing on the right hand side
of~\eqref{Delta1}:
\[
\etyk{Deltatilde}
\begin{array}{r@{\;=\;}l}
\alphatilde&j_{1}(\alpha)j_{2}(\alpha)-qj_{1}(\gamma)^{*}j_{2}(\gamma),\\
\Vs{5}\gammatilde&j_{1}(\gamma)j_{2}(\alpha)+j_{1}(\alpha)^{*}j_{2}(\gamma).
\end{array}
\]
We have $\deg(\alpha)=\deg(\alpha^{*})=0$, $\deg(\gamma)=1$ and
$\deg(\gamma^{*})=-1$ by~\eqref{rho}.  Assume \(\qbar\zeta=q\).
Using~\eqref{com} we may rewrite~\eqref{Deltatilde} in the following
form:
\[
\begin{array}{r@{\;=\;}l}
\alphatilde&j_{2}(\alpha)j_{1}(\alpha)-\qbar j_{2}(\gamma)j_{1}(\gamma)^{*},\\
\Vs{5}\gammatilde&j_{2}(\alpha)j_{1}(\gamma)+j_{2}(\gamma)j_{1}(\alpha)^{*}.
\end{array}
\]
Therefore,
\[
\etyk{Deltatilde1}
\begin{array}{r@{\;=\;}l}
\alphatilde^{*}&j_{1}(\alpha)^{*}j_{2}(\alpha)^{*}-q j_{1}(\gamma)j_{2}(\gamma)^{*},\\
\Vs{5}\gammatilde^{*}&j_{1}(\gamma)^{*}j_{2}(\alpha)^{*}+j_{1}(\alpha)j_{2}(\gamma)^{*}.
\end{array}
\]
The four equations \eqref{Deltatilde} and~\eqref{Deltatilde1}
together are equivalent to
\[
\etyk{mmm}
\begin{pmatrix}
  \alphatilde&-q\gammatilde^{*}\\\gammatilde&\alphatilde^{*}
\end{pmatrix}
=
\begin{pmatrix}
  j_{1}(\alpha)&-qj_{1}(\gamma)^{*}\\j_{1}(\gamma)&j_{1}(\alpha)^{*}
\end{pmatrix}
\begin{pmatrix}
  j_{2}(\alpha)&-qj_{2}(\gamma)^{*}\\j_{2}(\gamma)&j_{2}(\alpha)^{*}
\end{pmatrix}.
\]
Lemma~\ref{un} shows that the matrix
\[
\etyk{u}
u=
\begin{pmatrix}\alpha&-q\gamma^{*}\\\gamma&\alpha^{*}
\end{pmatrix}
\in\M_2(\algebra)
\]
is unitary.  Hence so is the matrix \(j_1(u)j_2(u)\) on the right
hand side of~\eqref{mmm}.  Now Lemma~\ref{un} shows that
$\alphatilde,\gammatilde\in \algebra\Tens_\zeta \algebra$
satisfy~\eqref{SU2qtil}.  So the universal property of~$\algebra$ in
Theorem~\ref{universal} gives a unique morphism~$\Delta$ with
$\Delta(\alpha)=\alphatilde$ and $\Delta(\gamma)=\gammatilde$.

The elements $\alpha$ and~$\gamma$ are homogeneous of degrees \(0\)
and~\(1\), respectively, by~\eqref{rho}.  Hence $\alphatilde$
and~$\gammatilde$ are homogeneous of degree \(0\) and~\(1\) as well.
Since \(\alpha\) and~\(\gamma\) generate~\(\algebra\), it follows
that~\(\Delta\) is \(\mathbb{T}\)\nb-equivariant.  This proves
statement~(1) in Theorem~\ref{main}.  Here we use the action
$\rho^{A\Tens_\zeta A}$ of~$\mathbb{T}$ with $\rho_z^{A\Tens_\zeta
  A} (j_1(a_1)j_2(a_2)) = j_1(\rho^A_z(a_1))j_2(\rho^A_z(a_2))$.  We
may rewrite~\eqref{mmm} as
\[
\begin{pmatrix}
  \Delta(\alpha)&-q\Delta(\gamma)^{*}\\\Delta(\gamma)&\Delta(\alpha)^{*}
\end{pmatrix}
= \begin{pmatrix}j_{1}(\alpha)&-qj_{1}(\gamma)^{*}\\j_{1}(\gamma)&j_{1}(\alpha)^{*}
\end{pmatrix}
\begin{pmatrix}j_{2}(\alpha)&-qj_{2}(\gamma)^{*}\\j_{2}(\gamma)&j_{2}(\alpha)^{*}
\end{pmatrix}.
\]
Identifying $\M_2(\algebra)$ with $\M_2(\zesp)\tens \algebra$, we may
further rewrite this as
\[
\etyk{Dmm3}
(\id\tens\Delta)(u) = (\id\tens j_{1})(u)\;(\id\tens j_{2})(u),
\]
where~$\id$ is the identity map on~$\M_2(\zesp)$.

Now we prove statement (2) in Theorem~\ref{main}.  Let
$j_{1},j_{2},j_{3}$ be the natural embeddings of~$\algebra$ into
$\algebra\Tens_\zeta \algebra\Tens_\zeta \algebra$.
Since~\(\Delta\) is \(\mathbb{T}\)\nb-equivariant, we may form
$\Delta\Tens_{\zeta}\id$ and $\id\Tens_{\zeta}\Delta$.  The values
of $\id\tens\left(\Delta\Tens_\zeta\id_{\algebra}\right)$ and
$\id\tens\left(\id_{\algebra}\Tens_\zeta\,\Delta\right)$ on the
right hand side of~\eqref{Dmm3} are equal:
\[
\begin{array}{r@{\;=\;(\id\tens j_{1})(u)\,(\id\tens j_{2})(u)\,(\id\tens j_{3})(u)}l}
\left(\id\tens(\Delta\Tens_\zeta\id_{\algebra})\circ\Delta\right)(u)&,\\
\left(\id\tens(\id_{\algebra}\Tens_\zeta\,\Delta)\circ\Delta\right)(u)&\Vs{5}.
\end{array}
\]
Thus $(\Delta\Tens_\zeta\id_{\algebra})\circ\Delta$ and
$(\id_{\algebra}\Tens_\zeta\,\Delta)\circ\Delta$ coincide on
$\alpha,\gamma,\alpha^{*},\gamma^{*}$.  Since the latter
generate~$\algebra$, this proves statement~(2) of
Theorem~\ref{main}.

Now we prove statement~(3).  Let
\[
S=\set{x\in \algebra}{j_{1}(x)\in\Delta(\algebra)j_{2}(\algebra)}.
\]
This is a closed subspace of~$\algebra$.  We may also
rewrite~\eqref{Dmm3} as
\[
\etyk{Dmm1}
\begin{pmatrix}
  j_{1}(\alpha)&-qj_{1}(\gamma)^{*}\\j_{1}(\gamma)&j_{1}(\alpha)^{*}
\end{pmatrix}
=
\begin{pmatrix}
  \Delta(\alpha)&-q\Delta(\gamma)^{*}\\\Delta(\gamma)&\Delta(\alpha)^{*}
\end{pmatrix}
\begin{pmatrix}
  j_{2}(\alpha)&-qj_{2}(\gamma)^{*}\\j_{2}(\gamma)&j_{2}(\alpha)^{*}
\end{pmatrix}^{*}.
\]
Thus $\alpha,\gamma,\alpha^{*},\gamma^{*}\in S$.  Let $x,y\in S$
with homogeneous~\(y\).  Proposition~\ref{8.44} gives
\begin{multline*}
  j_{1}(xy)
  =j_{1}(x)j_{1}(y)
  \in \Delta(\algebra)j_{2}(\algebra)j_{1}(y)
  = \Delta(\algebra)j_{1}(y)j_{2}(\algebra)
  \\\subseteq\Delta(\algebra)\Delta(\algebra)
  j_{2}(\algebra)j_{2}(\algebra)
  =\Delta(\algebra)j_{2}(\algebra).
\end{multline*}
That is, $xy\in S$.  Therefore, all monomials in
$\alpha,\gamma,\alpha^{*},\gamma^{*}$ belong to $S$, so that
$S=\algebra$.  Hence $j_{1}(\algebra)\subseteq
\Delta(\algebra)j_{2}(\algebra)$.  Now $\algebra\Tens_\zeta \algebra
= j_{1}(\algebra) j_{2}(\algebra) \subseteq \Delta(\algebra)
j_{2}(\algebra) j_{2}(\algebra) = \Delta(\algebra) j_{2}(\algebra)$,
which is one of the Podle\'s conditions.
Similarly, let
\[
R=\set{x\in \algebra}{j_{2}(x)\in j_{1}(\algebra)\Delta(\algebra)}.
\]
Then~$R$ is a closed subspace of~$\algebra$.  We may also
rewrite~\eqref{Dmm3} as
\[
\etyk{Dmm2}
\begin{pmatrix}
  j_{2}(\alpha)&-qj_{2}(\gamma)^{*}\\j_{2}(\gamma)&j_{2}(\alpha)^{*}\\
\end{pmatrix}
=
\begin{pmatrix}
  j_{1}(\alpha)&-qj_{1}(\gamma)^{*}\\j_{1}(\gamma)&j_{1}(\alpha)^{*}
\end{pmatrix}^{*}
\begin{pmatrix}
  \Delta(\alpha)&-q\Delta(\gamma)^{*}\\\Delta(\gamma)&\Delta(\alpha)^{*}
\end{pmatrix}.
\]
Thus $\alpha,\gamma,\alpha^{*},\gamma^{*}\in R$.  Let $x,y\in R$ with
homogeneous~\(x\).  Proposition~\ref{8.44} gives
\begin{multline*}
  j_{2}(xy)
  = j_{2}(x)j_{2}(y)\in j_{2}(x)j_{1}(\algebra)\Delta(\algebra)
  = j_{1}(\algebra) j_{2}(x) \Delta(\algebra)
  \\ \subseteq j_{1}(\algebra) j_{1}(\algebra) \Delta(\algebra)
  \Delta(\algebra)
  = j_{1}(\algebra)\Delta(\algebra).
\end{multline*}
Thus $xy\in R$.  Therefore, all monomials in
$\alpha,\gamma,\alpha^{*},\gamma^{*}$ belong to~$R$, so that
$R=\algebra$, that is, $j_{2}(\algebra)\subseteq j_{1}(\algebra)
\Delta(\algebra)$.  This implies $\algebra\Tens_\zeta \algebra =
j_{1}(\algebra) j_{2}(\algebra) \subseteq j_{1}(\algebra)
j_{1}(\algebra) \Delta(\algebra) = j_{1}(\algebra) \Delta(\algebra)$
and finishes the proof of Theorem~\ref{main}.

\section{The representation theory of
  \texorpdfstring{$\SU_{q}$}{SUq(2)}}

For real~\(q\), the relations defining the compact quantum
group~\(\SU_q(2)\) are dictated if we stipulate that the unitary
matrix in Lemma~\ref{un} is a representation and that a certain vector
in the tensor square of this representation is invariant.  Here we
generalise this to the complex case.  This is how we found $\SU_q(2)$.

Let~$\mathcal{H}$ be a $\mathbb{T}$\nb-Hilbert space, that is, a
Hilbert space with a unitary representation $U\colon
\mathbb{T}\rightarrow\mathcal{U}(\mathcal{H})$.  For $z\in\mathbb{T}$
and $x\in\mathcal{K}(\mathcal{H})$ we define
\[
\rho^{\mathcal{K}(\mathcal{H})}_z(x) = U_z xU_z^*.
\]
Thus $(\mathcal{K}(\mathcal{H}),\rho^{\mathcal{K}(\mathcal{H})})$ is a
\(\mathbb{T}\)\nb-\(\cst\)-algebra.  Let
$(X,\rho^X)\in\Obj(\mathcal{C}^*_{\mathbb{T}})$.  Since
$\rho^{\mathcal{K}(\mathcal{H})}$ is inner, the braided tensor product
$\mathcal{K}(\mathcal{H})\Tens_\zeta X$ may (and will) be identified
with $\mathcal{K}(\mathcal{H})\otimes X$ -- see
\cite{Meyer-Roy-Woronowicz:Twisted_tensor}*{Corollary 5.18} and
\cite{Meyer-Roy-Woronowicz:Twisted_tensor}*{Example 5.19}.

\begin{Def}
  \label{fd}
  Let $\mathcal{H}$ be a $\mathbb{T}$\nb-Hilbert space and let $v\in
  \M(\mathcal{K}(\mathcal{H})\tens \algebra)$ be a unitary element
  which is $\mathbb{T}$-invariant, that is,
  \((\rho^{\mathcal{K}(\mathcal{H})}_z\otimes\rho^X_z)(v) = v\).  We
  call~$v$ a \emph{representation} of~$\SU_{q}(2)$ on~$\mathcal{H}$ if
  \[
  (\id_{\mathcal{H}}\tens\Delta)(v)
  = (\id_{\mathcal{H}}\tens j_{1})(v)\;
  (\id_{\mathcal{H}}\tens j_{2})(v).
  \]
\end{Def}

Theorem~\ref{the:repr_SU_U} below will show that representations
of~\(\SU_q(2)\) are equivalent to representations of a certain
compact quantum group.  This allows us to carry over all the usual
structural results about representations of compact quantum groups
to~\(\SU_q(2)\).  In particular, we may tensor representations.  To
describe this directly, we need the following result:

\begin{Prop}
  \label{komutacja}
  Let $X,Y,U,T$ be $\mathbb{T}$\nb-$\cst$-algebras.  Let $v\in X\tens
  T$ and $w\in Y\tens U$ be homogeneous elements of degree~$0$.
  Denote the natural embeddings by
  \begin{alignat*}{2}
    i_{1}\colon X&\to X\Tens_\zeta Y,&\qquad
    i_{2}\colon Y&\to X\Tens_\zeta Y,\\
    j_{1}\colon U&\to U\Tens_\zeta T,&\qquad
    j_{2}\colon T&\to U\Tens_\zeta T.
  \end{alignat*}
  Then $(i_{1}\tens j_{2})(v)$ and $(i_{2}\tens j_{1})(w)$ commute in
  $(X\Tens_\zeta Y) \tens (U\Tens_\zeta T)$.
\end{Prop}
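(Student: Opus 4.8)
The plan is to reduce the statement to elementary homogeneous tensors and then to apply the commutation relation~\eqref{com} once inside $X\Tens_\zeta Y$ and once inside $U\Tens_\zeta T$; the two powers of~$\zeta$ so produced cancel because $v$ and~$w$ have degree~$0$. First I would note that $i_1\tens j_2$ and $i_2\tens j_1$ are $\cst$\nb-algebra morphisms into $(X\Tens_\zeta Y)\tens(U\Tens_\zeta T)$ --- they arise by functoriality of the minimal tensor product from $i_1\in\Mor(X,X\Tens_\zeta Y)$ and $j_2\in\Mor(T,U\Tens_\zeta T)$, respectively from $i_2$ and~$j_1$ --- so in particular they are linear and continuous. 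Fixing~$w$, the set of $v\in X\tens T$ for which $(i_1\tens j_2)(v)$ and $(i_2\tens j_1)(w)$ commute is then a closed linear subspace of $X\tens T$; since finite sums of homogeneous elements are dense in~$X$ and in~$T$, and the continuous conditional expectation onto the degree-$0$ part of $X\tens T$ annihilates $x\tens t$ unless $\deg(x)+\deg(t)=0$, the degree-$0$ subspace $(X\tens T)_0$ is the closed linear span of the elementary tensors $x\tens t$ with $x\in X_k$, $t\in T_{-k}$, $k\in\inte$. The same argument in $Y\tens U$ --- now with an elementary~$v$ fixed --- reduces us further to $w=y\tens u$ with $y\in Y_m$, $u\in U_{-m}$. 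So it is enough to treat $v=x\tens t$ and $w=y\tens u$ with these degrees.

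For such $v$ and~$w$ one computes, using only the definition of the product in a tensor product of $\cst$\nb-algebras,
\begin{align*}
  (i_1(x)\tens j_2(t))\,(i_2(y)\tens j_1(u))
  &= i_1(x)i_2(y)\tens j_2(t)j_1(u),\\
  (i_2(y)\tens j_1(u))\,(i_1(x)\tens j_2(t))
  &= i_2(y)i_1(x)\tens j_1(u)j_2(t).
\end{align*}
By~\eqref{com} applied in $X\Tens_\zeta Y$ one has $i_2(y)i_1(x)=\zeta^{-km}i_1(x)i_2(y)$, while~\eqref{com} applied in $U\Tens_\zeta T$ (where $U$ plays the role of the first and $T$ of the second leg) gives $j_1(u)j_2(t)=\zeta^{(-m)(-k)}j_2(t)j_1(u)=\zeta^{km}j_2(t)j_1(u)$. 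Substituting these into the second line and pulling the scalars out of the elementary tensor turns it into $\zeta^{-km}\zeta^{km}\bigl(i_1(x)i_2(y)\tens j_2(t)j_1(u)\bigr)=i_1(x)i_2(y)\tens j_2(t)j_1(u)$, which is the first line. Hence $(i_1\tens j_2)(v)$ commutes with $(i_2\tens j_1)(w)$.

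The displayed identities are pure bookkeeping and the two uses of~\eqref{com} are routine, so the one step that actually needs care --- and the one I would write out in full --- is the reduction to degree-balanced elementary tensors, i.e.\ the description of $(X\tens T)_0$ and $(Y\tens U)_0$ as closed linear spans. I would also emphasise that the degree-$0$ hypothesis on $v$ and~$w$ is essential and enters exactly at the cancellation $\zeta^{-km}\zeta^{km}=1$: for homogeneous $v=x\tens t$ and $w=y\tens u$ of arbitrary degrees the same computation yields $(i_1\tens j_2)(v)\,(i_2\tens j_1)(w)=\zeta^{\deg(x)\deg(y)-\deg(u)\deg(t)}\,(i_2\tens j_1)(w)\,(i_1\tens j_2)(v)$, and on the homogeneous components of degree-$0$ elements $v$ and~$w$ the exponent $\deg(x)\deg(y)-\deg(u)\deg(t)$ always vanishes.
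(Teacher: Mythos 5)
Your proof is correct and follows essentially the same route as the paper's: reduce to degree-balanced elementary tensors $v=x\tens t$, $w=y\tens u$ and apply~\eqref{com} once in $X\Tens_\zeta Y$ and once in $U\Tens_\zeta T$, the two powers of~$\zeta$ cancelling because $\deg(v)=\deg(w)=0$. The only difference is that you spell out the density and conditional-expectation argument behind the reduction to elementary tensors, which the paper dispatches with ``we may assume''.
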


\begin{proof}
  We may assume that $v=x\tens t$ and $w=y\tens u$ for homogeneous
  elements $x\in X$, $t\in T$, $y\in Y$ and $u\in U$.  Since
  $\deg(v)=\deg(w)=0$, we get $\deg(x)=-\deg(t)$ and
  $\deg(y)=-\deg(u)$.  The following computation completes the proof:
  \begin{align*}
    & \phantom{{}={}}(i_{1}\tens j_{2})(v)\,(i_{2}\tens j_{1})(w)
    = \left(i_{1}(x)\tens j_{2}(t)\right)\left(i_{2}(y)\tens j_{1}(u)\right)
    \\&= i_{1}(x)i_{2}(y)\tens j_{2}(t)j_{1}(u)
    = \zeta^{\deg(x)\deg(y)-\deg(t)\deg(u)}i_{2}(y)i_{1}(x)\tens j_{1}(u)j_{2}(t)
    \\& = \left(i_{2}(y)\tens j_{1}(u)\right)\left(i_{1}(x)\tens j_{2}(t)\right)
    = (i_{2}\tens j_{1})(w)\;(i_{1}\tens j_{2})(v)
    \\&= (i_{2}\tens j_{1})(w)\;(i_{1}\tens j_{2})(v).\qedhere
  \end{align*}
\end{proof}

\begin{Prop}
  \label{tensprod}
  Let $\mathcal{H}_1$ and~$\mathcal{H}_2$ be $\mathbb{T}$\nb-Hilbert
  spaces and let $v_i\in \M(\mathcal{K}(\mathcal{H}_i)\otimes A)$
  for \(i=1,2\) be representations of~$\SU_q(2)$.  Define
  \[
  v = (\iota_1\otimes\id_\algebra)(v_1)
  (\iota_2\otimes\id_\algebra)(v_2)
  \in\M(\mathcal{K}(\mathcal{H}_1)\Tens_\zeta
  \mathcal{K}(\mathcal{H}_2)\otimes A)
  \]
  and identify $\mathcal{K}(\mathcal{H}_1)\Tens_\zeta
  \mathcal{K}(\mathcal{H}_2) \cong
  \mathcal{K}(\mathcal{H}_1\otimes\mathcal{H}_2)$.  Then $v \in
  \M(\mathcal{K}(\mathcal{H}_1\otimes\mathcal{H}_2)\otimes A)$ is a
  representation of~$\SU_q(2)$ on $\mathcal{H}_1\otimes
  \mathcal{H}_2$.  It is denoted $v_1\tp v_2$ and called the
  \emph{tensor product} of $v_1$ and~$v_2$.
\end{Prop}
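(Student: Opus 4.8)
The plan is to verify directly that the element $v$ is a unitary, $\mathbb{T}$-invariant multiplier satisfying the comultiplication identity of Definition~\ref{fd}, working in the $\cst$-algebra $\mathcal{K}(\mathcal{H}_1\otimes\mathcal{H}_2)\otimes A$, which we identify with $\left(\mathcal{K}(\mathcal{H}_1)\Tens_\zeta\mathcal{K}(\mathcal{H}_2)\right)\otimes A$ via the remark before Definition~\ref{fd} that $\Tens_\zeta$ with a compact-operator factor reduces to the ordinary tensor product. Unitarity is immediate once we know that $(\iota_1\otimes\id_A)(v_1)$ and $(\iota_2\otimes\id_A)(v_2)$ are both unitary, since a product of unitaries is unitary; here $\iota_1,\iota_2$ denote the canonical embeddings of $\mathcal{K}(\mathcal{H}_1)$ and $\mathcal{K}(\mathcal{H}_2)$ into $\mathcal{K}(\mathcal{H}_1)\Tens_\zeta\mathcal{K}(\mathcal{H}_2)$, which are injective morphisms, so $\iota_i\otimes\id_A$ preserves unitarity of multipliers. $\mathbb{T}$-invariance follows because $\iota_1,\iota_2$ are $\mathbb{T}$-equivariant and $v_1,v_2$ are $\mathbb{T}$-invariant, hence so is each factor of $v$ and therefore $v$ itself.

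The heart of the proof is the comultiplication identity. I would apply $\id_{\mathcal{H}_1\otimes\mathcal{H}_2}\otimes\Delta$ to $v$ and expand using that $\Delta$ is a morphism (so it is multiplicative on the two factors of $v$) together with the representation identity $(\id\otimes\Delta)(v_i) = (\id\otimes j_1)(v_i)(\id\otimes j_2)(v_i)$ for each $i$. This produces a product of four terms
\[
(\id\otimes\iota_1\otimes\id)\left[(\id\otimes j_1)(v_1)(\id\otimes j_2)(v_1)\right]\cdot
(\id\otimes\iota_2\otimes\id)\left[(\id\otimes j_1)(v_2)(\id\otimes j_2)(v_2)\right],
\]
and the goal is the factorised form $(\id\otimes j_1)(v)(\id\otimes j_2)(v)$, which when expanded is also a product of four analogous terms but in a different order. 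To pass from one to the other I must commute the middle two factors past each other. This is exactly where Proposition~\ref{komutacja} enters: the two middle factors are of the shape $(i_1\otimes j_2)(\text{something of degree }0)$ and $(i_2\otimes j_1)(\text{something of degree }0)$ relative to the two $\mathcal{K}(\mathcal{H}_i)$ legs and the two $A$ legs coming from $\Delta$, so Proposition~\ref{komutacja} guarantees they commute. One checks the degree-$0$ hypothesis from $\mathbb{T}$-invariance of $v_1,v_2$.

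The main obstacle is precisely the bookkeeping of which legs the morphisms $j_1,j_2,\iota_1,\iota_2$ act on, and recognising the commuting pair in the correct form to invoke Proposition~\ref{komutacja}; this requires identifying $\mathcal{K}(\mathcal{H}_1)\Tens_\zeta\mathcal{K}(\mathcal{H}_2)\otimes A\otimes A$ appropriately and tracking the embeddings through the two distinct copies of $A$ produced by $\Delta\colon A\to A\Tens_\zeta A$. Once the commutation is justified, reassembling the four factors into $(\id\otimes j_1)(v)(\id\otimes j_2)(v)$ is a matter of using multiplicativity of $\iota_1\otimes\id$ and $\iota_2\otimes\id$ backwards. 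I would also remark that the construction is compatible with the monoidal associativity constraints of $\Tens_\zeta$, so that the resulting tensor product of representations is associative up to the expected isomorphisms, but this is a routine consequence of the coherence already built into $(\mathcal{C}^*_{\mathbb{T}},\Tens_\zeta)$ and need not be belaboured.
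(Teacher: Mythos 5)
Your proposal is correct and follows essentially the same route as the paper: expand $(\id\otimes\Delta)(v)$ into four factors using multiplicativity and the representation identity for $v_1,v_2$, swap the two middle factors via Proposition~\ref{komutacja} (whose degree-$0$ hypothesis is exactly the $\mathbb{T}$\nobreakdash-invariance of the $v_i$), and reassemble into $(\id\otimes j_1)(v)(\id\otimes j_2)(v)$. The paper's proof is precisely this computation, stated in one displayed chain of equalities.
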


\begin{proof}
  It is clear that~\(v\) is \(\mathbb{T}\)\nb-invariant.  We compute
  \begin{align*}
    (\id_{\mathcal{H}_1\otimes \mathcal{H}_2}\otimes\Delta)(v)
    &= (\id_{\mathcal{H}_1\otimes \mathcal{H}_2}\otimes\Delta)
    ((\iota_1\otimes\id_\algebra) (v_1)(\iota_2\otimes\id_\algebra)(v_2))\\
    &= (\iota_1\tens j_{1})(v_1)\;(\iota_1\tens j_{2})(v_1)\;
    (\iota_2\tens j_{1})(v_2)\;(\iota_2\tens j_{2})(v_2)
    \\&= (\iota_1\tens j_{1})(v_1)\;(\iota_2\tens j_{1})(v_2)\;
    (\iota_1\tens j_{2})(v_1)\; (\iota_2\tens j_{2})(v_2)
    \\&= (\id_{\mathcal{H}_1\otimes \mathcal{H}_2}\otimes j_1)(v)\;
    (\id_{\mathcal{H}_1\otimes \mathcal{H}_2}\otimes j_2)(v),
  \end{align*}
  where the third step uses Proposition~\ref{komutacja}.
\end{proof}

Now consider the Hilbert space~$\zesp^2$, let $\{e_0,e_1\}$ be its
canonical orthonormal basis.  We equip it with the representation
$U\colon \mathbb{T}\rightarrow\mathcal{U}(\zesp^2)$ defined by $U_z e_0 =
ze_0$ and $U_z e_1 = e_1$.  Let $\rho^{\M_2(\zesp)}$ be the action
implemented by~$U$:
\[
\rho^{\M_2(\zesp)}_z
\begin{pmatrix}
  a_{11}&a_{12}\\a_{21}&a_{22}
\end{pmatrix}
= \begin{pmatrix} a_{11}&za_{12} \\ \overline{z}a_{21} &a_{22}
\end{pmatrix},
\]
where $a_{ij}\in\zesp$.  We claim that
\[
u = \begin{pmatrix} \alpha&-q\gamma^{*}\\\gamma&\alpha^{*}
\end{pmatrix}
\in\M_2(\zesp)\otimes A
\]
is a representation of~$\SU_q(2)$ on~$\zesp^2$.  By Lemma~\ref{un},
the relations defining~$A$ are equivalent to~$u$ being unitary.  The
\(\mathbb{T}\)\nb-action on~\(A\) is defined so that~\(u\) is
\(\mathbb{T}\)\nb-invariant.  The comultiplication is defined exactly
so that~\(u\) is a corepresentation, see~\eqref{Dmm3}.

The particular shape of~$u$ contains further assumptions, however.  To
explain these, we consider an arbitrary compact quantum group
$\GG=(\C(\GG),\Delta_\GG)$ in~$\mathcal{C}^*_{\mathbb{T}}$ with a
unitary representation
\[
u = \begin{pmatrix} a&b\\c&d \end{pmatrix} \in \M_2(\C(\GG)),
\]
such that $a,b,c,d$ generate the $\cst$-algebra $\C(\GG)$.  We
assume that~$u$ is $\mathbb T$\nb-invariant for the above $\mathbb
T$\nb-action on~$\zesp^2$.  Thus $\deg(a) = \deg(d) =0$,
$\deg(b)=-1$, $\deg(c) =1$.

\begin{Thm}
  \label{the:SU_q_invariant_universal}
  Let~$\GG$ be a braided compact quantum group with a unitary
  representation~$u$ as above.  Assume $b\neq0$ and that the vector
  $e_0\otimes e_1-qe_1\otimes e_0 \in \zesp^2\otimes\zesp^2$ for
  \(q\in\zesp\) is invariant for the representation $u\tp u$.  Then
  \(q\neq0\), $\qbar\zeta=q$, $d = a^*$, $b = -qc^*$, and there is a
  unique morphism $\pi\colon \C(\SU_q(2)) \rightarrow\C(\GG)$ with
  $\pi(\alpha) =a$ and $\pi(\gamma) =c$.  This is
  $\mathbb{T}$\nb-equivariant and satisfies
  $(\pi\boxtimes_\zeta\pi)\circ\Delta_{\SU_q(2)} =
  \Delta_{\GG}\circ\pi$.
\end{Thm}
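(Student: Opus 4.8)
The plan is to squeeze everything out of the hypothesis that $\xi := e_0\otimes e_1 - q\,e_1\otimes e_0$ is invariant for $u\tp u$. Combined with the eight scalar relations packaged in ``$u$ is unitary'' (i.e.\ $u^*u = uu^* = \I$) and the degree constraints $\deg a = \deg d = 0$, $\deg b = -1$, $\deg c = 1$, this should force $q\neq0$, $\qbar\zeta = q$, $d = a^*$ and $b = -qc^*$ by elementary $\cst$-algebraic manipulations; once these hold, $u$ is exactly the matrix appearing in Lemma~\ref{un}, so Theorem~\ref{universal} produces the morphism~$\pi$, and its $\mathbb{T}$-equivariance and the intertwining with the comultiplications are checked on the two generators $\alpha,\gamma$.

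First I would make $u\tp u$ explicit. By Proposition~\ref{tensprod} and the identification $\mathcal{K}(\zesp^2)\Tens_\zeta\mathcal{K}(\zesp^2) \cong \mathcal{K}(\zesp^2\otimes\zesp^2) = \M_4(\zesp)$, the representation $u\tp u$ is the $4\times4$ matrix $(\iota_1\otimes\id)(u)\,(\iota_2\otimes\id)(u)$ over $\C(\GG)$; using the commutation rule~\eqref{com} (equivalently Proposition~\ref{komutacja}) and the degrees above, its entries are the evident products $u_{ik}u_{jl}$ adjusted by powers of $\zeta^{\pm1}$. Spelling out $(u\tp u)(\xi\otimes\I) = \xi\otimes\I$ row by row then gives four identities in $\C(\GG)$ involving $q$ and $\zeta$; two of them have the shape $ad = \I + (\text{monomial in }q,\zeta)\,bc$ and $cd = (\text{same monomial})\,dc$, while the other two relate $\{ab,ba\}$ and $\{cb,da\}$.

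Extracting the four conclusions is the main obstacle --- not conceptually, but in keeping track of the powers of $q$ and $\zeta$. If $q = 0$, the first displayed identity reads $ad = \I$, whence $\I = d^*a^*ad \le d^*d$ (because $a^*a\le\I$ by $aa^* + bb^* = \I$) and $d^*d \le \I$ (by $b^*b + d^*d = \I$); thus $d^*d = \I$, $b^*b = 0$, $b = 0$, contradicting $b\neq0$, so $q\neq0$. Next, left-multiply $ad = \I + (\text{monomial})\,bc$ by $a^*$ and substitute $a^*a = \I - c^*c$ and $a^*b = -c^*d$ (the $(1,1)$- and $(1,2)$-entries of $u^*u = \I$): the result rearranges to $d - a^* = c^*\bigl(cd - (\text{monomial})\,dc\bigr)$, whose right side vanishes by the $\{cd,dc\}$-identity, giving $d = a^*$. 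With $d = a^*$ reinserted, $u^*u = \I$ and $uu^* = \I$ collapse to $a^*a + c^*c = \I$, $aa^* + bb^* = \I$, $a^*b = -c^*a^*$, $ac^* = -ba$ and force $b,c$ to be normal; subtracting $aa^* + bb^* = \I$ from the $\{ad,bc\}$-identity gives $b(b^* + (\text{monomial})\,c) = 0$, the $\{cb,da\}$-identity (rewritten via $a^*a = \I - c^*c$ and $c^*c = cc^*$) gives $c(b + qc^*) = 0$, and the $\{cd,dc\}$-identity together with $ac^* = -ba$ gives $ba = -qc^*a$. A short argument with these relations --- tracking the relevant kernels, which are trivial unless $c^*a = 0$, a degenerate case excluded by $b\neq0$ --- yields $b = -qc^*$; feeding $b = -qc^*$, $d = a^*$ and $ba = -qc^*a$ into the $\{ab,ba\}$-identity (and using $ac^* = -ba = qc^*a$) then forces $\qbar\zeta = q$.

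Finally, $u = \begin{pmatrix} a & -qc^* \\ c & a^* \end{pmatrix}$ is unitary, so Lemma~\ref{un} shows $a,c$ satisfy the relations~\eqref{SU2q}, and Theorem~\ref{universal} gives a unique $\pi\in\Mor(\C(\SU_q(2)),\C(\GG))$ with $\pi(\alpha) = a$, $\pi(\gamma) = c$. Since $\deg\alpha = 0 = \deg a$ and $\deg\gamma = 1 = \deg c$ and $\alpha,\gamma$ generate $\C(\SU_q(2))$, the morphism $\pi$ is $\mathbb{T}$-equivariant, so $\pi\Tens_\zeta\pi$ is defined via~\eqref{funct} (here we use the just-established $\zeta = q/\qbar$, so that $\Delta_{\SU_q(2)}$ indeed lands in $\C(\SU_q(2))\Tens_\zeta\C(\SU_q(2))$). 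To verify $(\pi\Tens_\zeta\pi)\circ\Delta_{\SU_q(2)} = \Delta_{\GG}\circ\pi$ it suffices to check on $\alpha$ and $\gamma$. On $\alpha$ the left side is $(\pi\Tens_\zeta\pi)\bigl(j_1(\alpha)j_2(\alpha) - q\,j_1(\gamma)^*j_2(\gamma)\bigr) = j_1(a)j_2(a) - q\,j_1(c)^*j_2(c)$ by~\eqref{funct}, while the right side is $\Delta_\GG(a)$, which equals the $(1,1)$-entry of $(\id\otimes j_1)(u)\,(\id\otimes j_2)(u)$ since $u$ is a representation of $\GG$, namely $j_1(a)j_2(a) + j_1(-qc^*)j_2(c) = j_1(a)j_2(a) - q\,j_1(c)^*j_2(c)$; the two coincide, and the computation on $\gamma$ (the $(2,1)$-entry) is identical. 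As $\alpha,\gamma$ generate $\C(\SU_q(2))$, this finishes the proof.
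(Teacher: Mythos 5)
Your overall skeleton is right (derive $q\neq0$, $\qbar\zeta=q$, $d=a^*$, $b=-qc^*$ from invariance plus unitarity, then invoke Lemma~\ref{un} and Theorem~\ref{universal}), and your final part --- construction of $\pi$, its $\mathbb{T}$-equivariance, and the check of $(\pi\boxtimes_\zeta\pi)\circ\Delta_{\SU_q(2)}=\Delta_\GG\circ\pi$ on the generators --- is essentially what the paper does. But the core of the argument, extracting the four algebraic conclusions, is where your route both overcomplicates matters and leaves a genuine gap. The decisive step you are missing is to use that $(\iota_1\otimes\id)(u)$ is unitary to rewrite the invariance $(u\tp u)\xi=\xi$ of $\xi=e_0\otimes e_1-qe_1\otimes e_0$ as
\[
(\iota_2\otimes\id_{\C(\GG)})(u)\,\xi=(\iota_1\otimes\id_{\C(\GG)})(u^*)\,\xi .
\]
Both sides are then \emph{linear} in the entries of $u$ and their adjoints, and comparing coefficients of the basis $e_i\otimes e_j$ instantly yields $d=a^*$, $b=-qc^*$ and $b^*=-q\overline{\zeta}c$; the last two give $\qbar\zeta=q$ and $q\neq0$ once $b\neq0$. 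No unitarity relations, no kernel arguments, no bookkeeping of $\zeta$-powers in a $4\times4$ matrix.

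By contrast, you expand the quadratic condition $(u\tp u)\xi=\xi$ into four identities of the form (entry of $u$)(entry of $u$) and then try to disentangle them with the eight relations from $u^*u=uu^*=\I$. The pivotal claim --- ``a short argument with these relations, tracking the relevant kernels, which are trivial unless $c^*a=0$, a degenerate case excluded by $b\neq0$, yields $b=-qc^*$'' --- is not a proof: you neither show that the kernels in question are trivial nor that $c^*a=0$ is incompatible with $b\neq0$ (this is not obvious; in quotients of $\C(\SU_q(2))$ one cannot a priori rule out that $c^*a$ has a large kernel). Your subsequent derivation of $\qbar\zeta=q$ from the $\{ab,ba\}$-identity leans on the same unestablished non-degeneracy. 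Since the theorem's hypotheses do contain enough information (your quadratic system is equivalent to the linear one above), the gap is not fatal to the strategy in principle, but as written the middle third of your argument does not go through; the linearization trick is what makes the proof close.
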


\begin{proof}
  The representation $u\tp u\in \M_4(\C(\GG))$ is given by
  Proposition~\ref{tensprod}, which uses a canonical isomorphism
  $\M_2(\zesp) \boxtimes_\zeta\M_2(\zesp) \cong \M_4(\zesp)$.  This
  comes from the following standard representation of
  $\M_2(\zesp)\boxtimes_\zeta \M_2(\zesp)$ on $\zesp^2\otimes
  \zesp^2$.  For $T,S\in \M_2(\zesp)$ of degree $k,l$ and
  $x,y\in\zesp^2$ of degree $m,n$, we let $\iota_1(T) \iota_2(S)
  (x\otimes y) = \overline\zeta^{l m}Tx\otimes Sy$.  By construction,
  $u\tp u$ is $(\iota_1\otimes\id_{\C(\GG)})(u)\cdot (\iota_2\otimes
  \id_{\C(\GG)})(u)$.  So we may rewrite the invariance of $e_0\otimes
  e_1-qe_1\otimes e_0$ as
  \begin{equation}
    \label{eq:invariant_vector}
    (\iota_1\otimes \id_{\C(\GG)})(u^*)(e_0\otimes e_1-qe_1\otimes e_0)
    = (\iota_2\otimes \id_{\C(\GG)})(u)(e_0\otimes e_1-qe_1\otimes e_0)
  \end{equation}
  in $\zesp^2\otimes\zesp^2\otimes \C(\GG)$.  The left and right hand sides
  of~\eqref{eq:invariant_vector} are
  \begin{gather*}
    e_0\otimes e_1 \otimes a^*+e_1\otimes e_1\otimes b^* - q e_0\otimes
    e_0\otimes c^* - qe_1\otimes e_0\otimes d^*,\\
    e_0\otimes e_0\otimes b + e_0\otimes e_1\otimes d
    - qe_1\otimes e_0\otimes a
    - q \overline{\zeta} e_1\otimes e_1\otimes c,
  \end{gather*}
  respectively.  These are equal if and only if $b = -q c^*$, $d =
  a^*$, and $b^* = -q\overline\zeta c$.  Since $b\neq0$, this implies
  \(q\neq0\) and $\qbar\zeta=q$, and~\(u\) has the form in
  Lemma~\ref{un}.  Since~$u$ is a representation, it is unitary.  So
  $a,c$ satisfy the relations defining $\SU_q(2)$ and
  Theorem~\ref{universal} gives the unique morphism~$\pi$.  The
  conditions on~$u$ in Definition~\ref{fd} imply that~$\pi$ is
  $\mathbb{T}$\nb-equivariant and compatible with comultiplications.
\end{proof}

The proof also shows that~\(q\) is uniquely determined by the
condition that \(e_0\otimes e_1 -qe_1\otimes e_0\) should be
\(\SU_q(2)\)\nb-invariant.  Up to scaling, the basis \(e_0,e_1\) is the
unique one
consisting of joint eigenvectors of the \(\mathbb{T}\)\nb-action
with degrees \(1\) and~\(0\).  Hence the braided quantum
group \((\C(\SU_q(2)),\Delta)\) determines~\(q\) uniquely.

An invariant vector for~\(\SU_q(2)\) should also be homogeneous for the
\(\mathbb{T}\)\nb-action.  There are three cases of homogeneous
vectors in \(\zesp^2\otimes\zesp^2\): multiplies of \(e_0\otimes
e_0\), multiples of \(e_1\otimes e_1\), and linear combinations of
\(e_0\otimes e_1\) and \(e_1\otimes e_0\).  If a non-zero multiple
of~\(e_i\otimes e_j\) for \(i,j\in\{0,1\}\) is invariant, then the
representation~$u$ is reducible.  Ruling out such degenerate cases, we
may normalise the invariant vector to have the form \(e_0\otimes e_1
-qe_1\otimes e_0\) assumed in
Theorem~\ref{the:SU_q_invariant_universal}.

Roughly speaking, $\SU_q(2)$ is the universal family of braided
quantum groups generated by a $2$\nb-dimensional representation with
an invariant vector in \(u\tp u\).

There is, however, one extra symmetry that changes the
\(\mathbb{T}\)\nb-action on~\(\C(\SU_q(2))\) and that corresponds to the
permutation of the basis \(e_0,e_1\).  Given a
\(\mathbb{T}\)\nb-algebra~\(A\), let \(S(A)\) be the same
\(\cst\)\nb-algebra with the \(\mathbb{T}\)\nb-action by
\(\rho^{S(A)}_z = (\rho^A_z)^{-1}\).  Since the commutation
relation~\eqref{com} is symmetric in \(k,l\), there is a unique
isomorphism
\[
S(A\boxtimes_\zeta B) \cong S(A) \boxtimes_\zeta S(B),\qquad
j_1(a)\mapsto j_1(a),\quad j_2(b)\mapsto j_2(b).
\]
Hence the comultiplication on~\(\C(\SU_q(2))\) is one
on~\(S(\C(\SU_q(2)))\) as well.

\begin{Prop}
  \label{pro:Aq_symmetry}
  The braided quantum groups \(S(\C(\SU_q(2)))\)
  and \(\C(\SU_{\tilde{q}}(2))\) for
  \(\tilde{q} = \qbar^{-1}\) are isomorphic as braided quantum
  groups.
\end{Prop}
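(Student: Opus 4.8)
The plan is to write down an explicit isomorphism and verify that it respects the $\mathbb{T}$\nb-action and the comultiplication. Observe first that the comultiplication of $\C(\SU_{\tilde q}(2))$ with $\tilde q=\qbar^{-1}$ takes values in the twisted product $\Tens_{\zeta'}$ with $\zeta'=\tilde q/\overline{\tilde q}=\qbar^{-1}/q^{-1}=q/\qbar=\zeta$, so that $\SU_{\tilde q}(2)$ and $S(\C(\SU_q(2)))$ are both braided quantum groups in the \emph{same} category $(\mathcal{C}^*_{\mathbb{T}},\Tens_\zeta)$ and the statement makes sense. (The formally similar choice $\alpha\mapsto\alpha$ below would instead force the parameter $\qbar$, whose twist $\zeta^{-1}$ is wrong, so passing to adjoints is essential.) I claim that sending the canonical generators $\alpha,\gamma$ of $\C(\SU_{\tilde q}(2))$ to $\alpha^*$ and $q\gamma^*$ in $\algebra=\C(\SU_q(2))$ defines an isomorphism $\theta\colon\C(\SU_{\tilde q}(2))\to S(\algebra)$ of braided quantum groups. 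Throughout I use the identification $S(\algebra\Tens_\zeta\algebra)\cong S(\algebra)\Tens_\zeta S(\algebra)$, $j_k\mapsto j_k$, recorded above, so that the comultiplication $\Delta$ of $\SU_q(2)$ is at the same time the comultiplication of $S(\algebra)$.

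First, $\alpha^*$ and $q\gamma^*$, viewed in $\algebra$, satisfy the relations~\eqref{SU2q} with $q$ replaced by $\tilde q$. Indeed $(q\gamma^*)^*(q\gamma^*)=\modul{q}^2\gamma\gamma^*=\modul{q}^2\gamma^*\gamma$, so, using $\modul{\tilde q}^2=\modul{q}^{-2}$, the first two relations for $\tilde q$ become the second and first relations for $q$; normality is clear; and taking adjoints of the last two relations for $q$ gives $\gamma^*\alpha^*=q\,\alpha^*\gamma^*$ and $\gamma\alpha^*=\qbar\,\alpha^*\gamma$, from which the last two relations for $\tilde q$ follow and the value $\tilde q=\qbar^{-1}$ is forced. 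This is in effect the first step of the proof of Theorem~\ref{the:compare_q}. Hence Theorem~\ref{universal}, applied with parameter $\tilde q$, yields $\theta$. Since $q\mapsto\qbar^{-1}$ is an involution on $\zesp^*$, the same recipe applied with $\tilde q$ in place of $q$ produces a $*$-homomorphism in the reverse direction, and evaluating the two composites on generators (with $q\,\overline{\tilde q}=1$ and $\tilde q\,\qbar=1$) shows they are mutually inverse, so $\theta$ is a $*$-isomorphism. It is $\mathbb{T}$\nb-equivariant: for the inverted $\mathbb{T}$\nb-action on $S(\algebra)$ the elements $\alpha^*$ and $q\gamma^*$ are homogeneous of degrees $0$ and $1$, matching the degrees of the generators of $\C(\SU_{\tilde q}(2))$, and a morphism taking generators to homogeneous elements of matching degree is equivariant, as in the argument for statement~(1) of Theorem~\ref{main}; the same applies to $\theta^{-1}$.

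It remains to check that $\theta$ intertwines the comultiplications, that is, $(\theta\Tens_\zeta\theta)\circ\Delta_{\SU_{\tilde q}(2)}=\Delta_{S(\algebra)}\circ\theta$; since both sides are morphisms it is enough to compare them on the generators $\alpha,\gamma$ of $\C(\SU_{\tilde q}(2))$. Applying $\theta\Tens_\zeta\theta$ to the defining formulas~\eqref{Delta1} for $\tilde q$ and using functoriality~\eqref{funct} together with $j_k(q\gamma^*)=q\,j_k(\gamma)^*$, the images of $\alpha$ and $\gamma$ become $j_1(\alpha)^*j_2(\alpha)^*-q\,j_1(\gamma)j_2(\gamma)^*$ and $q\bigl(j_1(\gamma)^*j_2(\alpha)^*+j_1(\alpha)j_2(\gamma)^*\bigr)$, the scalar in the first one collapsing because $-\tilde q\,\qbar\,q=-q$. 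By the adjoint formulas~\eqref{Deltatilde1} these are exactly $\Delta(\alpha^*)$ and $q\,\Delta(\gamma)^*=\Delta(q\gamma^*)$, that is, $\Delta_{S(\algebra)}(\theta(\alpha))$ and $\Delta_{S(\algebra)}(\theta(\gamma))$. Therefore $\theta$ is an isomorphism of braided quantum groups.

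I do not expect a genuine obstacle here. The step requiring the most care is the last one: one must keep track of which of the two $\mathbb{T}$\nb-actions and which of the embeddings $j_1,j_2$ are in play, distinguish the conjugations $\qbar^{-1}$ and $q^{-1}$, and remember that the adjoint formulas~\eqref{Deltatilde1} already incorporate the reorderings via~\eqref{com} recorded just before them.
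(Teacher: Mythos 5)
Your proposal is correct and follows essentially the same route as the paper: the paper constructs the map $\varphi\colon S(\C(\SU_q(2)))\to\C(\SU_{\tilde q}(2))$, $\alpha\mapsto\tilde\alpha^*$, $\gamma\mapsto\tilde q\tilde\gamma^*$, which is exactly the inverse of your $\theta$, verifies the relations via Lemma~\ref{un} (you check them directly, which is equivalent), and checks equivariance by degrees and the comultiplication on generators just as you do. Your preliminary observation that $\tilde q/\overline{\tilde q}=\zeta$, so both objects live in the same category $(\mathcal{C}^*_{\mathbb{T}},\Tens_\zeta)$, is a worthwhile sanity check that the paper leaves implicit.
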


\begin{proof}
  Let \(\alpha,\gamma\) be the standard generators of
  \(A_q = \C(\SU_q(2))\) and
  let \(\tilde{\alpha},\tilde{\gamma}\) be the standard generators
  of~\(A_{\tilde{q}}\).  We claim that there is an isomorphism
  \(\varphi\colon A_q\to A_{\tilde{q}}\) that maps \(\alpha\mapsto
  \tilde\alpha^*\) and \(\gamma\mapsto \tilde{q}\tilde\gamma^*\) and
  that is an isomorphism of braided quantum groups from \(S(A_q)\)
  to~\(A_{\tilde{q}}\).  Lemma~\ref{un} implies that the matrix
  \[
  \begin{pmatrix} 0&1\\-1&0 \end{pmatrix}
  \begin{pmatrix}
    \tilde\alpha&-\tilde{q}\tilde\gamma^*\\
    \tilde\gamma&\tilde\alpha^*
  \end{pmatrix}
  \begin{pmatrix} 0&-1\\1&0 \end{pmatrix}
  =
  \begin{pmatrix}
    \tilde\alpha^*&-\tilde\gamma\\
    \tilde{q}\tilde\gamma^*&\tilde\alpha
  \end{pmatrix}
  =
  \begin{pmatrix}
    \varphi(\alpha)&\varphi(-q \gamma^*)\\
    \varphi(\gamma)&\varphi(\alpha^*)
  \end{pmatrix}
  \]
  is unitary.  Now Lemma~\ref{un} and Theorem~\ref{universal} give
  the desired morphism~\(\varphi\).  Since the inverse
  of~\(\varphi\) may be constructed in the same way, \(\varphi\) is
  an isomorphism.  On generators, it reverses the grading, so it is
  \(\mathbb{T}\)\nb-equivariant as a map \(S(A_q)\to
  A_{\tilde{q}}\).

  Let \(\Delta\) and \(\tilde\Delta\) denote the comultiplications
  on \(S(A_q)\) and~\(A_{\tilde{q}}\).  We compute
  \begin{align*}
    (\varphi\boxtimes_\zeta\varphi)\Delta(\alpha)
    &= (\varphi\boxtimes_\zeta\varphi) (j_1(\alpha)j_2(\alpha) -
    qj_1(\gamma^*) j_2(\gamma))
    \\&= j_1(\varphi(\alpha)) j_2(\varphi(\alpha))
    - qj_1(\varphi(\gamma^*)) j_2(\varphi(\gamma))
    \\&= j_1(\tilde\alpha^*) j_2(\tilde\alpha^*)
    - \tilde{q} j_1(\tilde\gamma) j_2(\tilde\gamma^*),\\
    \tilde\Delta(\varphi(\alpha))
    &= \tilde\Delta(\tilde\alpha^*)
    = j_2(\tilde\alpha)^* j_1(\tilde\alpha)^* -
    q^{-1} j_2(\tilde\gamma)^* j_1(\tilde\gamma)
    \\&= j_1(\tilde\alpha)^* j_2(\tilde\alpha)^* -
    q^{-1} \zeta j_1(\tilde\gamma) j_2(\tilde\gamma)^*.
  \end{align*}
  These are equal because \(\tilde{q} = \qbar^{-1} =
  q^{-1}\zeta\).  Similarly,
  \((\varphi\boxtimes_\zeta\varphi)\Delta(\gamma)
  =\tilde\Delta(\varphi(\gamma))\).  Thus~\(\varphi\) is an
  isomorphism of braided quantum groups.
\end{proof}

\section{The semidirect product quantum group}
\label{sec:U2}

A quantum analogue of the semidirect product construction for groups
turns the braided quantum group~$\SU_q(2)$ into a genuine compact
quantum group $(B,\Delta_B)$; we will publish details of this
construction separately.  Here~\(B\) is the universal
\(\cst\)\nb-algebra with three generators $\alpha,\gamma,z$ with the
$\SU_q(2)$-relations for $\alpha$ and~$\gamma$ and
\begin{align*}
  z\alpha z^* &=\alpha,\\
  z\gamma z^* &= \zeta^{-1}\gamma,\\
  z z^* &=z^*z = \I;
\end{align*}
the comultiplication is defined by
\begin{align*}
  \Delta_B(z)&= z\otimes z,\\
  \Delta_B(\alpha) &= \alpha\otimes\alpha-q\gamma^*z\otimes\gamma,\\
  \Delta_B(\gamma) &=\gamma\otimes\alpha+\alpha^*z\otimes\gamma.
\end{align*}
There are two embeddings $\iota_1, \iota_2\colon A \rightrightarrows
B\otimes B$ defined by
\begin{alignat*}{2}
  \iota_1(\alpha) &= \alpha\otimes \I&\qquad
  \iota_2(\alpha) &= \I\otimes\alpha, \\
  \iota_1(\gamma) &= \gamma\otimes \I&\qquad
  \iota_2(\gamma) &= z\otimes\gamma.
\end{alignat*}
Homogeneous elements $x,y\in A$ satisfy
\[
\etyk{SU2_commutation}
\iota_1(x)\iota_2(y) = \zeta^{\deg(x)\deg(y)}\iota_2(y)\iota_2(x).
\]
Thus we may rewrite the comultiplication as
\begin{align*}
  \Delta_B(z)&= z\otimes z,\\
  \Delta_B(\alpha) &=
  \iota_1(\alpha)\iota_2(\alpha)-q\iota_1(\gamma)^*\iota_2(\gamma),\\
  \Delta_B(\gamma) &=
  \iota_1(\gamma)\iota_2(\alpha)+\iota_1(\alpha)^*\iota_2(\gamma).
\end{align*}
In particular, $\Delta_B$ respects the commutation relations for
$(\alpha,\gamma,z)$, so it is a well-defined
$^*$\nb-\hspace{0pt}homomorphism \(B\to B\otimes B\).  It is routine
to check the cancellation conditions~\eqref{cancellation} for \(B\),
so $(B,\Delta_B)$ is a compact quantum group.

This is a compact quantum group with a projection as
in~\cite{Roy:Qgrp_with_proj}.  Here the projection $\pi\colon
B\rightarrow B$ is the unique $^*$\nb-homomorphism with $\pi(\alpha) =
1_{B}$, $\pi(\gamma) = 0$ and $\pi(z) = z$; this is an idempotent
bialgebra morphism.  Its ``image'' is the copy of \(\C(\mathbb{T})\)
generated by~\(z\), its ``kernel'' is the copy of \(A\) generated by
\(\alpha\) and~\(\gamma\).

For \(q=1\), \(B\cong \C(\mathbb{T}\times \SU(2))\) as a
\(\cst\)\nb-algebra, which is commutative.  The representation
on~\(\zesp^2\) combines the standard embedding of~\(\SU(2)\) and the
representation of~\(\mathbb{T}\) mapping~\(z\) to the diagonal
matrix with entries \(z,1\).  This gives a homeomorphism
\(\mathbb{T}\times\SU(2) \cong \nSU(2)\).  So \((B,\Delta_B)\) is
the group~\(\nSU(2)\), written as a semidirect product of~\(\SU(2)\)
and~\(\mathbb{T}\).

For \(q\neq1\), \((B,\Delta_B)\) is the coopposite of the quantum
$\nSU_q(2)$ group described previously by Zhang and Zhao
in~\cite{Zhang-Zhao:Uq2}: the substitutions \(a=\alpha^*\),
\(b=\gamma^*\) and \(D=z^*\) turn our generators and relations into
those in~\cite{Zhang-Zhao:Uq2}, and the comultiplications differ only
by a coordinate flip.

\begin{Thm}
  \label{the:repr_SU_U}
  Let $U\in \M(\mathcal{K}(\mathcal{H})\otimes\C(\mathbb{T}))$ be a
  unitary representation of~$\mathbb{T}$ on a Hilbert
  space~$\mathcal{H}$.  There is a bijection between representations
  of \(\SU_q(2)\) on~\(\mathcal{H}\) and representations
  of~\((B,\Delta_B)\) on~\(\mathcal{H}\) that restrict to the given
  representation on~$\mathbb{T}$.
\end{Thm}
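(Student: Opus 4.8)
The plan is to set up the bijection explicitly, using the two Hopf $^*$\nb-homomorphisms that connect~$B$ with $\C(\mathbb{T})$: the inclusion $e\colon \C(\mathbb{T})\to B$ with $e(z)=z$, and the projection $q\colon B\to \C(\mathbb{T})$ with $q(\alpha)=\I$, $q(\gamma)=0$, $q(z)=z$ (so $q$ is~$\pi$ followed by the identification of~$\pi(B)$ with $\C(\mathbb{T})$); these satisfy $q\comp e=\id_{\C(\mathbb{T})}$, so that $\overline{U}:=(\id_{\mathcal{H}}\tens e)(U)$ is a unitary representation of~$B$ on~$\mathcal{H}$ with $(\id_{\mathcal{H}}\tens q)(\overline{U})=U$. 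The facts about the semidirect-product structure that I would use — treated at length in the companion paper, but for the single group~$B$ at hand a routine check on generators — are: the copy of $\algebra=\langle\alpha,\gamma\rangle$ inside~$B$ equals $\set{b\in B}{(\id_B\tens q)\Delta_B(b)=b\tens\I}$; on it the comultiplication factors as $\Delta_B\restr{\algebra}=\Phi\comp\Delta$ through the $^*$\nb-homomorphism $\Phi\colon \algebra\Tens_\zeta\algebra\to B\tens B$ with $\Phi\comp j_i=\iota_i$ (well defined since~\eqref{com} and~\eqref{SU2_commutation} are the same relation, and injective); and the $\mathbb{T}$\nb-action~$\rho^{\algebra}$ equals $(q\tens\id_B)\Delta_B\restr{\algebra}$.

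The two maps are then as follows. Given a representation $v\in\M(\mathcal{K}(\mathcal{H})\tens\algebra)$ of~$\SU_q(2)$, set $w:=(\id_{\mathcal{H}}\tens\iota_\algebra)(v)\cdot\overline{U}$, where $\iota_\algebra\colon \algebra\hookrightarrow B$ is the inclusion. Given a representation $w\in\M(\mathcal{K}(\mathcal{H})\tens B)$ of~$B$ with $(\id_{\mathcal{H}}\tens q)(w)=U$, set $v:=w\cdot\overline{U}^*$. Since~$\overline{U}$ is unitary these two assignments invert one another, so the theorem reduces to showing that each lands where it should: that $w=(\id\tens\iota_\algebra)(v)\overline{U}$ is a unitary \emph{corepresentation} of~$B$ that restricts to~$U$, and that $v=w\overline{U}^*$ lies in $\M(\mathcal{K}(\mathcal{H})\tens\algebra)$, is $\mathbb{T}$\nb-invariant, and is a \emph{corepresentation} of~$\SU_q(2)$.

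I would verify these as follows. The restriction $(\id\tens q)(w)=U$ is immediate from $q\comp\iota_\algebra=\varepsilon_\algebra$ (the counit $\alpha\mapsto\I$, $\gamma\mapsto0$) and the standard identity $(\id\tens\varepsilon_\algebra)(v)=\I$ for corepresentations. Applying $(\id\tens\id\tens q)\comp(\id\tens\Delta_B)$ to $v=w\overline{U}^*$, and using $(\id\tens\Delta_B)(w)=w_{12}w_{13}$, $(\id\tens\Delta_B)(\overline{U})=\overline{U}_{12}\overline{U}_{13}$ and $(\id\tens q)(w)=U$, yields $v_{12}\tens\I$, hence $v\in\M(\mathcal{K}(\mathcal{H})\tens\algebra)$ by the description of $\algebra\subseteq B$ above; applying $(\id\tens q\tens\id)\comp(\id\tens\Delta_B)$ instead produces $(\id\tens\rho^{\algebra}_z)(v)$ on one side and the conjugation of~$v$ by~$U_z$ on the $\mathcal{K}(\mathcal{H})$\nb-leg on the other, which is exactly the $\mathbb{T}$\nb-invariance of~$v$ once one unwinds how $\rho^{\mathcal{K}(\mathcal{H})}$ is implemented by~$U$. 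For the corepresentation identity of~$w$ one pushes the braided corepresentation identity for~$v$ through~$\Phi$: this turns $(\id\tens j_2)(v)$ into $(\id\tens\iota_2)(v)$, which — because $\iota_2$ multiplies the middle leg by $z^{\deg}$ and~$v$ is $\mathbb{T}$\nb-invariant — equals $\overline{U}_{12}\,v_{13}\,\overline{U}_{12}^*$; combining this with $(\id\tens\Delta_B)(\overline{U})=\overline{U}_{12}\overline{U}_{13}$ the $\overline{U}$\nb-factors telescope and one arrives at $(\id\tens\Delta_B)(w)=w_{12}w_{13}$. That $v=w\overline{U}^*$ conversely satisfies the braided corepresentation identity of Definition~\ref{fd} is the same chain of equalities read backwards, using injectivity of~$\Phi$.

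The hard part is this last bookkeeping: one must keep precise track of how $\iota_2$ — equivalently, the braided coproduct~$\Delta$ — distributes powers of~$z$, how these combine with the powers coming from~$\overline{U}$ via $z\alpha z^*=\alpha$ and $z\gamma z^*=\zeta^{-1}\gamma$, and how the identity $\zeta=q/\qbar$ is precisely what makes the surviving terms cancel. This is the same $\mathbb{T}$\nb-grading phenomenon that makes~$\Delta$ well defined in Theorem~\ref{main}, now lifted to representations, together with the compatibility of the identification $\mathcal{K}(\mathcal{H})\Tens_\zeta\algebra\cong\mathcal{K}(\mathcal{H})\tens\algebra$ of Definition~\ref{fd} with the embedding $\algebra\hookrightarrow B$. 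An alternative that avoids doing this by hand is to invoke the general statement that, for the compact quantum group obtained from a braided compact quantum group by the semidirect-product construction, the representations restricting to a prescribed representation of the base are exactly the representations of the braided compact quantum group on the corresponding equivariant Hilbert space, and then to specialise it to $\SU_q(2)$ and~$B$.
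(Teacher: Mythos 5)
Your strategy is the paper's own --- pass between $v$ and a representation of $B$ by multiplying with the image of $U$ under $e\colon\C(\mathbb{T})\to B$ --- and the supporting facts you isolate (the characterisation of $\algebra\subseteq B$, the factorisation of $\Delta_B$ on $\algebra$ through $\Phi\colon\algebra\Tens_\zeta\algebra\to B\tens B$, injectivity of $\Phi$ for the converse direction) are correct and worth making explicit. But your key formula is off by an inverse, and this is not a harmless convention: with the paper's conventions ($\rho^{\mathcal{K}(\mathcal{H})}_z=U_z\cdot U_z^*$, $\deg\gamma=1$, $\iota_2(\gamma)=z\tens\gamma$), the $\mathbb{T}$\nb-invariance of~$v$ gives
\[
(\id\tens\iota_2)(v)=\overline{U}_{12}^{\,*}\,v_{13}\,\overline{U}_{12},
\]
not $\overline{U}_{12}v_{13}\overline{U}_{12}^{\,*}$ as you claim: a component of~$v$ of degree~$n$ in the $\algebra$\nb-leg has degree~$-n$ for $\Ad(U)$ in the $\mathcal{K}(\mathcal{H})$\nb-leg, so it is conjugation by $\overline{U}_{12}^{\,*}$ that inserts $z^{\deg}$ into the middle leg. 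Consequently the telescoping works for $u=(\id\tens\iota_\algebra)(v)\,\overline{U}^{\,*}$ (the paper's $u=vU^*$) but fails for your $w=(\id\tens\iota_\algebra)(v)\,\overline{U}$: one gets $(\id\tens\Delta_B)(w)=v_{12}\overline{U}_{12}^{\,*}v_{13}\overline{U}_{12}^{\,2}\overline{U}_{13}$, which equals $w_{12}w_{13}$ only if $v$ commutes with $U_z^2$ in its first leg. The fundamental representation is a concrete counterexample: $w=u\overline{U}$ has $(1,1)$\nb-entry $\alpha z$, and $\Delta_B(\alpha z)=\alpha z\tens\alpha z-q\gamma^*z^2\tens\gamma z$, whereas $(w_{12}w_{13})_{11}=\alpha z\tens\alpha z-q\gamma^*\tens\gamma z$.

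The two mistakes are consistent with one another (the inverted conjugation identity makes the inverted unitary appear to work), and both are forced by your wish that the $B$\nb-representation restrict to~$U$ rather than~$U^*$ under the projection~$q$; with the correct formula the restriction is~$U^*$. Since $U\mapsto U^*$ is a bijection on unitary representations of~$\mathbb{T}$, the theorem as a bijection statement survives, but you must either accept the restriction~$U^*$ or build the contragredient into the statement. Once $\overline{U}$ is replaced by $\overline{U}^{\,*}$ throughout (and $v=w\overline{U}$ in the inverse direction), the rest of your argument, including the converse via injectivity of~$\Phi$, goes through and coincides with the paper's proof.
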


\begin{proof}
  Let $v\in \M(\mathcal{K}(\mathcal{H})\otimes A)$ be a
  unitary representation of~$\SU_q(2)$ on~$\mathcal{H}$.  Since
  $B$ contains copies of $A$ and
  $\C(\mathbb{T})$, we may view $u=vU^*$ as an element of
  $\M(\mathcal{K}(\mathcal{H})\otimes B)$.  The
  $\mathbb{T}$\nb-invariance of~$v$,
  \[
  (\id\otimes\rho^A)(v) = U_{12}^*v_{13}U_{12}
  \]
  and the formula for~$\iota_2$ (which is basically given by the
  action~$\rho^A$) show that
  \[
  U_{12}(\id\otimes\iota_2)(v)U_{12}^* = v_{13}.
  \]
  Using $(\id\otimes\iota_2)(v)=v_{12}$, we conclude that~$u$ is a
  unitary representation of~$(B,\Delta_B)$:
  \[
  (\id\otimes\Delta_B)(u) =
  v_{12}(\id\otimes\iota_2)(v)U^*_{12}U^*_{13}
  = v_{12}U^*_{12}v_{13}U^*_{13} = u_{12}u_{13}.
  \]
  Going back and forth between \(u\) and~\(v\) is the desired
  bijection.
\end{proof}

\begin{bibdiv}
  \begin{biblist}
\bib{Kruszynski-Woronowicz:Gelfand-Naimark}{article}{
  author={Kruszy\'nski, Pawe\l },
  author={Woronowicz, Stanis\l aw Lech},
  title={A noncommutative Gelfand--Na\u \i mark theorem},
  journal={J. Operator Theory},
  volume={8},
  date={1982},
  number={2},
  pages={361--389},
  issn={0379-4024},
  review={\MRref {677419}{84b:46068}},
  eprint={http://www.theta.ro/jot/archive/1982-008-002/1982-008-002-009.html},
}

\bib{MacLane:Categories}{book}{
  author={MacLane, Saunders},
  title={Categories for the working mathematician},
  note={Graduate Texts in Mathematics, Vol. 5},
  publisher={Springer},
  place={New York},
  date={1971},
  pages={ix+262},
  review={\MRref {0354798}{50\,\#7275}},
  doi={10.1007/978-1-4757-4721-8},
}

\bib{Majid:Examples_braided}{article}{
  author={Majid, Shahn},
  title={Examples of braided groups and braided matrices},
  journal={J. Math. Phys.},
  volume={32},
  date={1991},
  number={12},
  pages={3246--3253},
  issn={0022-2488},
  review={\MRref {1137374}{93i:17019}},
  doi={10.1063/1.529485},
}

\bib{Majid:Quantum_grp}{book}{
  author={Majid, Shahn},
  title={Foundations of quantum group theory},
  publisher={Cambridge University Press},
  place={Cambridge},
  date={1995},
  pages={x+607},
  isbn={0-521-46032-8},
  review={\MRref {1381692}{97g:17016}},
  doi={10.1017/CBO9780511613104},
}

\bib{Meyer-Roy-Woronowicz:Twisted_tensor}{article}{
  author={Meyer, Ralf},
  author={Roy, Sutanu},
  author={Woronowicz, Stanis\l aw Lech},
  title={Quantum group-twisted tensor products of \(\textup C^*\)\nobreakdash -algebras},
  journal={Internat. J. Math.},
  volume={25},
  date={2014},
  number={2},
  pages={1450019, 37},
  issn={0129-167X},
  review={\MRref {3189775}{}},
  doi={10.1142/S0129167X14500190},
}

\bib{Meyer-Roy-Woronowicz:Twisted_tensor_2}{article}{
  author={Meyer, Ralf},
  author={Roy, Sutanu},
  author={Woronowicz, Stanis\l aw Lech},
  title={Quantum group-twisted tensor products of \(\textup {C}^*\)\nobreakdash -algebras II},
  journal={J. Noncommut. Geom.},
  date={2015},
  issn={1661-6952},
  status={accepted},
  note={\arxiv {1501.04432}},
}

\bib{Roy:Qgrp_with_proj}{thesis}{
  author={Roy, Sutanu},
  title={\(\textup C^*\)\nobreakdash -Quantum groups with projection},
  date={2013},
  type={phdthesis},
  institution={Georg-August Universit\"at G\"ottingen},
  eprint={http://hdl.handle.net/11858/00-1735-0000-0022-5EF9-0},
}

\bib{Soltan:Non_cpt_grp_act}{article}{
  author={So\l tan, Piotr Miko\l aj},
  title={Examples of non-compact quantum group actions},
  journal={J. Math. Anal. Appl.},
  volume={372},
  date={2010},
  number={1},
  pages={224--236},
  issn={0022-247X},
  review={\MRref {2672521}{2012d:46178}},
  doi={10.1016/j.jmaa.2010.06.045},
}

\bib{Woronowicz:Twisted_SU2}{article}{
  author={Woronowicz, Stanis\l aw Lech},
  title={Twisted $\mathrm {SU}(2)$ group. An example of a noncommutative differential calculus},
  journal={Publ. Res. Inst. Math. Sci.},
  volume={23},
  date={1987},
  number={1},
  pages={117--181},
  issn={0034-5318},
  review={\MRref {890482}{88h:46130}},
  doi={10.2977/prims/1195176848},
}

\bib{Woronowicz:CQG}{article}{
  author={Woronowicz, Stanis\l aw Lech},
  title={Compact quantum groups},
  conference={ title={Sym\'etries quantiques}, address={Les Houches}, date={1995}, },
  book={ publisher={North-Holland}, place={Amsterdam}, },
  date={1998},
  pages={845--884},
  review={\MRref {1616348}{99m:46164}},
}

\bib{Zhang-Zhao:Uq2}{article}{
  author={Zhang, Xiao Xia},
  author={Zhao, Ervin Yunwei},
  title={The compact quantum group $U_q(2)$. I},
  journal={Linear Algebra Appl.},
  volume={408},
  date={2005},
  pages={244--258},
  issn={0024-3795},
  review={\MRref {2166867}{2007b:46126}},
  doi={10.1016/j.laa.2005.06.004},
}
  \end{biblist}
\end{bibdiv}
\end{document}